\newtheorem{thm}{\bf Definition}[section]
\newtheorem{remark}{\bf Remark}[section]
\newtheorem{pro}{\bf Proposition}[section]
\newtheorem{theorem}{\bf Theorem}[section]
\newcommand{\vecx}{\mathbf{x}}
\newcommand{\vecX}{\mathbf{X}}
\newcommand{\vecN}{\mathbf{N}}
\newcommand{\vecz}{\mathbf{z}}
\newcommand{\vecy}{\mathbf{y}}
\newcommand{\vecc}{\mathbf{c}}
\newcommand{\vectheta}{\boldsymbol\theta}
\newcommand{\argmax}{\arg\max}
\newtheorem{assumption}{Assumption}
\newtheorem{lemma}{Lemma}
\journal{}
\begin{document}

\begin{frontmatter}

%% Title, authors and addresses

%% use the tnoteref command within \title for footnotes;
%% use the tnotetext command for theassociated footnote;
%% use the fnref command within \author or \affiliation for footnotes;
%% use the fntext command for theassociated footnote;
%% use the corref command within \author for corresponding author footnotes;
%% use the cortext command for theassociated footnote;
%% use the ead command for the email address,
%% and the form \ead[url] for the home page:
%% \title{Title\tnoteref{label1}}
%% \tnotetext[label1]{}
%% \author{Name\corref{cor1}\fnref{label2}}
%% \ead{email address}
%% \ead[url]{home page}
%% \fntext[label2]{}
%% \cortext[cor1]{}
%% \affiliation{organization={},
%%             addressline={},
%%             city={},
%%             postcode={},
%%             state={},
%%             country={}}
%% \fntext[label3]{}

\title{ $\beta$-integrated local depth and corresponding partitioned local depth representation}

%% use optional labels to link authors explicitly to addresses:
%% \author[label1,label2]{}
%% \affiliation[label1]{organization={},
%%             addressline={},
%%             city={},
%%             postcode={},
%%             state={},
%%             country={}}
%%
%% \affiliation[label2]{organization={},
%%             addressline={},
%%             city={},
%%             postcode={},
%%             state={},
%%             country={}}

\author[1]{Siyi~Wang}
\author[2]{Alexandre~Leblanc}
\author[1]{Paul~D.~McNicholas}
%% Author affiliation
\affiliation[1]{organization={McMaster University},%Department and Organization
            addressline={1280 main St west}, 
            city={Hamilton},
            postcode={L8S 4L8}, 
            state={Ontario},
            country={Canada}}
\affiliation[2]{
  organization={University of Manitoba},
  addressline={66 Chancellors Circle},
  city={Winnipeg},
  postcode={R3T 2N2},
  state={Manitoba},
  country={Canada}
}
%% Abstract
\begin{abstract}
%% Text of abstract
A novel local depth definition, $\beta$-integrated local depth ($\beta$-ILD), is proposed as a generalization of the local depth introduced by Paindaveine and Van Bever \cite{paindaveine2013depth}, designed to quantify the local centrality of data points. $\beta$-ILD inherits desirable properties from global data depth and remains robust across varying locality levels. A partitioning approach for $\beta$-ILD is introduced, leading to the construction of a matrix that quantifies the contribution of one point to another's local depth, providing a new interpretable measure of local centrality. These concepts are applied to classification and outlier detection tasks, demonstrating significant improvements in the performance of depth-based algorithms.
\end{abstract}

%% Keywords
\begin{keyword}
%% keywords here, in the form: keyword \sep keyword
data depth\sep local depth\sep classification\sep outlier detection
%% PACS codes here, in the form: \PACS code \sep code

%% MSC codes here, in the form: \MSC code \sep code
%% or \MSC[2008] code \sep code (2000 is the default)

\end{keyword}

\end{frontmatter}

%% Add \usepackage{lineno} before \begin{document} and uncomment 
%% following line to enable line numbers
%% \linenumbers

%% main text
%%

%% Use \section commands to start a section
\section{Introduction}
For multivatiate datasets, measuring the centrality of a subset of data or defining a central region within that data is a problem at the center of numerous statistical tasks. A widely-used approach to address this is through the concept of data depth. In general, a depth function is a bounded measure that provides a meaningful center-outward ranking of points in a multidimensional space. This is formally summarized in Definition \ref{def:DATADEPTH}. Some notable examples of depth functions include halfspace depth (Tukey, $1975$ \cite{Tukey1975}), simplicial depth (Liu, $1990$ \cite{liu1990notion}), spatial depth (Serfling, $2002$ \cite{serfling2002depth}), projection depth (Zuo and Serfling, $2000$ \cite{zuo2000general}) and Mahalanobis depth.
\begin{thm}[Statistical depth function]\label{def:DATADEPTH}
    Let $\mathcal{P}$ denote the class of probability distributions on $\mathbb{R}^d$, and let $P$ be any distribution from $\mathcal{P}$. A depth function $D(\bullet\mid P):\mathbb{R}^d\to[0,\infty)$ is a bounded, non-negative function which satisfies the following properties \cite{zuo2000general}:
    \begin{enumerate}[(P1)]
        \item \textbf{$\mathcal{T}$-invariance}: Define a class of transformations $ \mathcal{T} $ over $\mathbb{R}^d$, for any transformation $T\in\mathcal{T}$, there is $D(T(\vecx) \mid P_{T(\vecX)}) = D(\vecx \mid P_{\vecX})$ where $P_\vecX$ is the distribution of a random variable $\vecX$.
        \item \textbf{Maximality at center}: If the distribution $P$ is centrally symmetric about a point $\vectheta\in \mathbb{R}^d$, then $\vectheta$ attains the maximum depth value, i.e., $D(\vectheta\mid P)=\sup_{x\in \mathbb{R}^d}D(\vecx\mid P)$. $\vectheta$ is referred to as the depth median.
        \item  \textbf{Monotonicity on rays}: For any $P$ with a depth median $\vectheta$, $D(\vecx\mid P)\leq D((1-\gamma)\vectheta+\gamma\vecx\mid P)$ for any $\vecx\in\mathbb{R}^d$ and $\gamma\in [0,1]$.
        \item  \textbf{Vanishing at infinity}: For any $P$, $D(\vecx\mid P)\to 0$ as $||\vecx||\to \infty$.
    \end{enumerate}
\end{thm}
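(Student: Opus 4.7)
Since Definition \ref{def:DATADEPTH} is an axiomatic specification rather than an assertion requiring proof in the usual sense, the natural accompanying task is to verify that the four conditions (P1)--(P4) are simultaneously realizable by a concrete function, so that the definition is non-vacuous and ready to serve as a template for the $\beta$-ILD construction that follows. The plan is to carry out this verification for one of the standard examples cited in the introduction, namely Mahalanobis depth, and then indicate how the same scheme adapts to halfspace, simplicial, and projection depth.

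First, I would take $D_M(\vecx \mid P) = \bigl(1 + (\vecx - \vecmu)^\top \vecSigma^{-1} (\vecx - \vecmu)\bigr)^{-1}$, where $\vecmu$ and $\vecSigma$ are the mean and covariance of $P$, and use the classical identity that the Mahalanobis quadratic form is preserved under affine bijections $T(\vecx) = A\vecx + b$: under $T$ the pair $(\vecmu, \vecSigma)$ transforms to $(A\vecmu + b,\, A\vecSigma A^\top)$, so $(\vecx - \vecmu)^\top \vecSigma^{-1}(\vecx - \vecmu)$ is invariant. This delivers (P1) when the class $\mathcal{T}$ is taken to be the affine bijections of $\mathbb{R}^d$.

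Next, (P2) and (P3) follow by direct computation. For (P2), central symmetry about $\vectheta$ forces $\vecmu = \vectheta$, and $D_M$ attains its maximum at $\vecx = \vectheta$ because the quadratic form vanishes there and is strictly positive elsewhere. For (P3), along the ray $(1-\gamma)\vectheta + \gamma\vecx$ the quadratic form evaluates to $\gamma^2 (\vecx - \vectheta)^\top \vecSigma^{-1}(\vecx - \vectheta)$, which is non-decreasing in $\gamma\in[0,1]$; hence $D_M$ is non-increasing as one moves from the median outward, which is exactly the inequality demanded in (P3). Finally, (P4) is immediate: since $\vecSigma^{-1}$ is positive definite, $(\vecx - \vecmu)^\top \vecSigma^{-1}(\vecx - \vecmu) \to \infty$ as $||\vecx|| \to \infty$, so $D_M \to 0$.

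The main subtlety I expect is (P1): the class $\mathcal{T}$ must be specified precisely, since different depth notions respect different groups of transformations (Mahalanobis and simplicial under affine bijections, halfspace under arbitrary affine maps, projection essentially under orthogonal transformations), and the choice of $\mathcal{T}$ is what genuinely distinguishes one family of depth functions from another. I would make this dependence explicit and, if the paper later invokes the definition with a particular $\mathcal{T}$ in mind for $\beta$-ILD, select the verifying examples so that they satisfy exactly that form of invariance; (P2)--(P4) then remain routine under the same outline for each candidate depth.
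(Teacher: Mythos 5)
The statement you were given is a definition, not a theorem: the paper adopts the Zuo--Serfling axioms verbatim and offers no proof, so there is nothing in the paper to compare your argument against. Your decision to instead verify non-vacuity by exhibiting a concrete depth satisfying (P1)--(P4) is a sensible reading of the task, and your Mahalanobis computation is correct in every step: the quadratic form $(\vecx-\vecmu)^{\top}\vecSigma^{-1}(\vecx-\vecmu)$ is indeed preserved under nonsingular affine maps via $(\vecmu,\vecSigma)\mapsto(A\vecmu+b,\,A\vecSigma A^{\top})$, central symmetry pins the mean to $\vectheta$ for (P2), the form scales as $\gamma^{2}$ along the segment for (P3), and positive definiteness of $\vecSigma^{-1}$ gives (P4). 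Your emphasis on pinning down $\mathcal{T}$ is also well placed; the paper's own Remark 1.1 makes exactly this point, and Assumption \ref{assump:depth} later fixes $\mathcal{T}$ to the similarity transformations for the theoretical results. One small correction: projection depth as defined in \eqref{eq:pd} with the median and MAD is fully affine invariant, not merely invariant under orthogonal transformations (the Med/MAD standardization of each univariate projection absorbs the linear part of the map), and the paper relies on this when it states that PD "has all the properties outlined in Definition \ref{def:DATADEPTH}." Likewise, halfspace depth requires the affine map to be a bijection for invariance to hold, so the distinction you draw between "affine bijections" and "arbitrary affine maps" does not actually separate those examples. These slips are in a parenthetical aside and do not affect the Mahalanobis verification itself.
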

\begin{remark}
The property P1 as originally introduced by Zuo and Serfling is affine invariance \cite{zuo2000general}, but some depth definitions relax this condition to invariance under similarity transformations, such as location, scale, rotation, and reflection, as discussed in \cite{serfling2002depth, cuevas2009depth, ramsay2019integrated}. To allow for this different requirement, we adopt a more general version referred to as $\mathcal{T}$-invariance, following the review of Mosler and Mozharovskyi \cite{mosler2022choosing}. \textcolor{black}{In this paper, we allow $\mathcal{T}$ to represent any transformation family that relaxes affine invariance (e.g., similarity transformations).}
\end{remark}

Given for above properties, data depth can be seen as a global concept. Any depth function generally defines a universal center and depth values decrease monotonically in all directions from that center. As a result, traditional depth functions will ignore local characteristics of a distribution, making them more appropriate for distributions with unimodal and convex support. In contrast, many real-world distributions exhibit non-convex support and/or multimodality, and tasks such as clustering often require attention to the local features of the data. To extend the applicability of data depth to such scenarios, researchers introduced the concept of local depth, which assesses depth values at a local level, capturing more localized structural information.

One of the earliest examples of local depth in the literature is the kernelized spatial depth (KSD), introduced in $2009$ \cite{chen2008outlier} and applied to outlier detection. KSD calculates spatial depth after kernelizing the distances between points using kernels such as the Gaussian kernel, $$\kappa(\mathbf{x},\mathbf{y})=(\sqrt{2\pi}\sigma)^{-d}\frac{\exp(-\|\mathbf{x}-\mathbf{y}\|^{2})}{2\sigma^{2}},$$ for $d$-dimensional data. The bandwidth parameter $\sigma$ determines the level of localization. As $\sigma$ approaches zero, the KSD converges to the underlying data density \cite{dutta2016multi}.

The term ``local depth'' was proposed by Agostinelli and Romanazzi \cite{agostinelli2011local} in $2011$, as they introduced localized versions of both halfspace depth and simplicial depth. In their approach, they introduced a parameter, $\tau$, which controls the level of localization. Although not directly relying on distance to localize their depth, a smaller value of $\tau$ results in smaller data regions being considered, leading to a more localized depth measure.

In $2013$, Paindaveine and Van Bever \cite{paindaveine2013depth} introduced a general framework for defining local depth that is compatible with most depth functions. When calculating the local depth of any point $\vecx \in \mathbb{R}^d$, the core idea is to create a new distribution that is centrally symmetric about $\vecx$. Specifically, they considered the distribution $0.5P_\vecX + 0.5P_{2\vecx - \vecX}$ for a given distribution $P_\vecX$. They then introduced a locality parameter $\beta \in (0, 1]$, and the local depth of $\vecx$ is computed with respect to the distribution conditioned on the smallest central region of the symmetric distribution that has a probability mass of at least $\beta$, $P_{\vecx}^\beta$. For simplicity, we will refer to this as the $\beta$-local depth of the point $\vecx$ throughout this paper.

Paindaveine and Van Bever's methodology extends to data depths of the integrated type. For example, Piana and Svarc \cite{Piana2022} introduced a local version of the integrated dual depth (IDD) \cite{cuevas2009depth}. The IDD combines one-dimensional projections with one-dimensional depth measures. For a random element $\vecx$ in a Banach space with distribution $P$, IDD is defined as:
\begin{equation}
    IDD(\vecx| P) = \int D_{1d}(f(\vecx)\mid P_f) dQ(f),
\end{equation}
where $D_{1d}$ is a univariate simplicial depth function, $P_f$ is the distribution of $f(\vecX)$, and $Q(f)$ is a probability measure over the dual space. The local version of IDD constructs the neighborhood of an observation within each one-dimensional projection space separately, rather than in the original data space, offering enhanced computational efficiency compared to the $\beta$-local simplicial depth.

As observed, the local depth definitions previously discussed require a tuning parameter to control their locality, which can be challenging to optimize. In contrast, Berenhaut et al. \cite{berenhaut2022social} proposed a unique non-parametric approach known as local community depth (LCD). \textcolor{black}{Rather than restricting the computation of depth values to a pre-defined local region}, this method captures local geometric centrality by analyzing the local focus region constructed for every pair of points. This pairing approach, which relies on ordinal information (i.e., the ordering of dissimilarities rather than their exact values), is similar to the ideas behind lens depth~\cite{liu2011lens, kleindessner2017lens} and its generalized form, $\beta$-skeleton depth~\cite{mengta2018}. Lens depth is based on the intersection of two balls centered at each of the two points, whereas the local focus region in LCD is defined as their union. Although LCD measures local centrality, it also considers the relative positions of all points in the dataset. Without a locality control parameter, the LCD value of a point can be influenced by its global position. For instance, in Figure \ref{fig:LCDsmi}, points in the eyes that are closer to the nose have higher LCD values compared to the points that are more central within the eyes.

\begin{figure}[ht]
    \centering
    \begin{subfigure}[b]{0.48\textwidth}
        \centering
        \includegraphics[width=\textwidth]{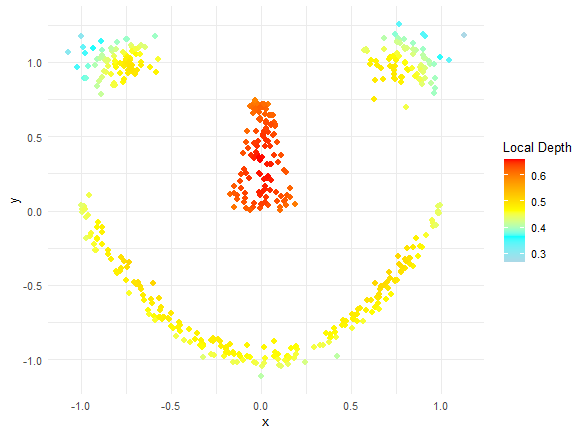}
        \caption{}
        \label{fig:LCDsmi}
    \end{subfigure}
    \hfill
    \begin{subfigure}[b]{0.48\textwidth}
        \centering
        \includegraphics[width=\textwidth]{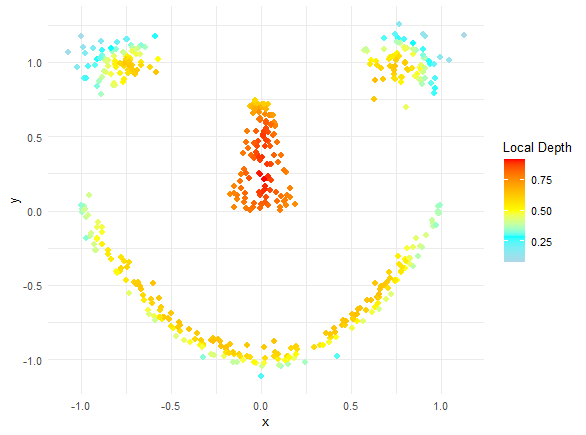}
    \caption{}
    \label{fig:ILDsmi}
    \end{subfigure}
\caption{Examples of LCD and $\beta$-ILD values in the simulated smiley dataset: (a) LCD (b) $\beta$-ILD.}
\end{figure}
The development of local depth motivates us to propose an extension of previous local depth definitions called $\beta$-integrated local depth ($\beta$-ILD). This new contribution builds on $\beta$-LD and is designed to be more robust to changes in the locality parameter. Moreover, $\beta$-ILD can also resemble the performance of LCD by considering all points in the dataset rather than being confined to predefined neighborhoods, one example is shown in Figure \ref{fig:ILDsmi}. This flexibility allows $\beta$-ILD to inherit desirable properties from both $\beta$-LD and LCD. 
The main contributions of this paper are summarized into $3$ parts:
\begin{enumerate}[1] 
\item The concept of $\beta$-integrated local depth ($\beta$-ILD) is proposed, treating the locality parameter $\beta$ as an independent variable and integrating the depth function with respect to it. Theoretical properties inherited from global depth and $\beta$-LD are analyzed, including consistency, continuity as a function of $\vecx$, $\mathcal{T}$-invariance, centrality at extreme locality levels, and vanishing at infinity. 
\item A connection between $\beta$-ILD and the partitioned local depth (PaLD) framework introduced in LCD is established. The column sums of the PaLD matrix constructed based on $\beta$-ILD (referred to as PILD) are shown to serve as an interpretable and effective measure of local centrality. 
\item The proposed concepts are applied to classification and outlier detection tasks. A new depth-based classification method and a depth-based similarity matrix are introduced based on the constructed PILD matrix, demonstrating impressive improvements on existing depth-based methods. 
\end{enumerate}
The remainder of this paper is organized as follows: Section \ref{Sec: betaLD} reviews the core concepts of $\beta$-LD and introduces $\beta$-ILD, accompanied by examples demonstrating the robustness of $\beta$-ILD to changes in the locality parameter. Section \ref{sec:property} discusses the theoretical properties of $\beta$-ILD and presents its PaLD-based representation. Section \ref{sec:application} provides application examples in classification and outlier detection. Finally, Section \ref{sec:conclu} concludes the paper and outlines directions for future work. Proofs are included in the Appendix.

\section{From $\beta$-local depth to $\beta$-integrated local depth}\label{Sec: betaLD}
In this section, we first review the core concepts of $\beta$-local depth and then extend it to the $\beta$-integrated local depth. For simplicity, we assume that the depth function under discussion is bounded between $0$ and $1$.
\begin{thm}[Central region]
    A central region (or level set) at level $\alpha\in (0,1]$ of any depth function $D$ with respect to a distribution $P$ is $R_\alpha(P)=\{\vecx \in \mathbb{R}^d: D(\vecx \mid P)\geq \alpha\}$. It can also be parameterized from a probability perspective, which is commonly used in the context of local depth. In this view, the $\beta$-central region is defined as the smallest region enclosed by the contour of depth $\alpha$ which achieves a probability mass of at least $\beta\in (0,1]$, formally represented as:
    $$
   R^\beta(P)=\bigcap\limits_{\alpha\in A(\beta)}R_\alpha(P),\quad\text{where}\quad A(\beta)=\{\alpha:P(R_\alpha)\geq\beta\} .
    $$
\end{thm}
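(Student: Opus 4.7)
Although the statement above is formally a definition, it embeds the non-trivial claim that the intersection $R^\beta(P)=\bigcap_{\alpha\in A(\beta)}R_\alpha(P)$ realizes the verbal characterization as ``the smallest region enclosed by a depth contour whose probability mass is at least $\beta$.'' The plan is to verify three points in sequence: (i) $A(\beta)$ is non-empty so the intersection is meaningful; (ii) $R^\beta(P)$ itself satisfies the mass condition $P(R^\beta(P))\geq \beta$; and (iii) no strictly smaller depth level set has mass at least $\beta$, so the name ``smallest'' is justified.

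The key observation throughout is the monotonicity $\alpha \leq \alpha' \Rightarrow R_{\alpha'}(P) \subseteq R_\alpha(P)$, which makes $\alpha\mapsto P(R_\alpha(P))$ non-increasing and $A(\beta)$ a down-set of $(0,1]$. For (i), as $\alpha\downarrow 0$ the family $R_\alpha(P)$ expands to cover $\{\vecx: D(\vecx\mid P)>0\}$, which carries mass $1$ under the mild regularity assumption that $D(\cdot\mid P)>0$ almost surely (a property shared by essentially all depth functions on non-degenerate distributions). Hence some sufficiently small $\alpha>0$ lies in $A(\beta)$. Setting $\alpha^\star(\beta)=\sup A(\beta)$, the intersection over $A(\beta)$ coincides with $\bigcap_{\alpha<\alpha^\star(\beta)}R_\alpha(P)$, which by the upper semicontinuity of $D$ and the nested closedness of its super-level sets equals $R_{\alpha^\star(\beta)}(P)$. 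A monotone convergence argument applied to this nested intersection then yields (ii). Finally, (iii) follows because any $\alpha''>\alpha^\star(\beta)$ lies outside $A(\beta)$ by definition of the supremum, so $R_{\alpha''}(P)$, being a strictly smaller level set, fails the mass condition.

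The main subtlety is that the map $\alpha \mapsto P(R_\alpha(P))$ may have jump discontinuities, so it is not automatic that the supremum $\alpha^\star(\beta)$ is attained in $A(\beta)$. When it is not, $R^\beta(P)$ arises as a monotone limit of level sets rather than directly as a single one, and the mass inequality must be justified through a limit-from-below argument; this is where upper semicontinuity of $D$ or absolute continuity of $P$ is typically invoked in the literature. Addressing this cleanly across the full range of depth functions considered in the paper, without sacrificing the equivalence between the verbal and set-theoretic formulations, is the delicate point I expect to be the main obstacle.
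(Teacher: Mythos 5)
The item you are proving is a \emph{definition} in the paper (the paper's theorem-style environment \texttt{thm} is typeset as ``Definition''), and the paper supplies no proof for it; there is nothing to compare your argument against. Your reading of it as an implicit well-definedness claim --- that $\bigcap_{\alpha\in A(\beta)}R_\alpha(P)$ really is the smallest depth region carrying mass at least $\beta$ --- is a legitimate sanity check, and your argument is essentially sound, but two remarks are in order. First, the ``main subtlety'' you flag dissolves on its own: since $A(\beta)$ is a down-set, $R^\beta(P)=\bigcap_{\alpha<\alpha^\star}R_\alpha(P)=\{\vecx: D(\vecx\mid P)\ge\alpha^\star\}=R_{\alpha^\star}(P)$ is a purely set-theoretic identity (no upper semicontinuity of $D$ needed), and continuity from above of the probability measure along a sequence $\alpha_n\uparrow\alpha^\star$ with $\alpha_n\in A(\beta)$ gives $P(R_{\alpha^\star}(P))=\lim_n P(R_{\alpha_n}(P))\ge\beta$ directly; hence $\alpha^\star\in A(\beta)$ whenever $A(\beta)\neq\emptyset$, so the supremum is always attained and no limit-from-below or absolute-continuity argument is required. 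Second, your step (i) quietly assumes $D(\cdot\mid P)>0$ $P$-almost surely and $\beta<1$; for $\beta=1$ (which the paper allows) one can have $P(R_\alpha(P))<1$ for every $\alpha>0$ (e.g.\ halfspace depth under a continuous distribution), in which case $A(1)=\emptyset$ and the displayed intersection is the whole space by convention rather than by your argument --- worth stating explicitly if you want the verbal and set-theoretic formulations to match over the full range $\beta\in(0,1]$.
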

For simplicity in writing, we will use $ R^\beta $ as an abbreviation for $ R^\beta(P) $ for the remainder of the paper.
\begin{thm}[$\beta$-local depth]
   For a distribution $P_\vecX$, the $\beta$-local depth of any location $\vecx \in \mathbb{R}^d$ is defined based on a mixture distribution $0.5P_\vecX + 0.5P_{2\vecx - \vecX}$. The corresponding $\beta$-central region of this mixture distribution is denoted as $R^\beta_\vecx$. Let $D$ be a given depth function, then the $\beta$-local depth of $\vecx$ is defined as: 
   \begin{equation}
       LD^\beta(\vecx \mid P_\vecX)=D(\vecx \mid P_\vecx^\beta),
   \end{equation}
where $P_\vecx^\beta$ is the probability distribution of $P_\vecX$ conditioned on the region $R^\beta_\vecx$. 
\end{thm}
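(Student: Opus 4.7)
The final displayed statement is a definition rather than a claim requiring proof, so strictly speaking there is no theorem to prove here. What one would want to verify in its place are the natural well-posedness checks and baseline consistency properties that make the construction $LD^\beta(\vecx \mid P_\vecX) = D(\vecx \mid P_\vecx^\beta)$ meaningful. Below I sketch how I would lay out those verifications.

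First, I would verify that the mixture $M_\vecx := \tfrac{1}{2} P_\vecX + \tfrac{1}{2} P_{2\vecx - \vecX}$ is centrally symmetric about $\vecx$. This follows from a direct computation: the reflection $\vecy \mapsto 2\vecx - \vecy$ sends $P_\vecX$ to $P_{2\vecx-\vecX}$ and vice versa, leaving the symmetric mixture invariant. Applying property (P1) to this reflection, the function $D(\bullet \mid M_\vecx)$ is itself symmetric about $\vecx$, so each level set $R_\alpha(M_\vecx)$ is centrally symmetric about $\vecx$; consequently, so is the intersection $R^\beta_\vecx$. By (P2), $\vecx$ attains the supremum of $D(\bullet \mid M_\vecx)$, so $\vecx$ lies in every $R_\alpha(M_\vecx)$ and hence in $R^\beta_\vecx$, guaranteeing that $R^\beta_\vecx$ is nonempty for every $\beta \in (0,1]$.

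Second, I would show that conditioning to form $P_\vecx^\beta$ is legitimate, i.e.\ that $P_\vecX(R^\beta_\vecx) > 0$. Central symmetry of $R^\beta_\vecx$ about $\vecx$ implies $P_{2\vecx - \vecX}(R^\beta_\vecx) = P_\vecX(R^\beta_\vecx)$, so $P_\vecX(R^\beta_\vecx) = M_\vecx(R^\beta_\vecx) \geq \beta > 0$ by the defining property of the $\beta$-central region. Hence $P_\vecx^\beta$ is a well-defined probability measure supported on $R^\beta_\vecx$, and $D(\vecx \mid P_\vecx^\beta)$ is unambiguous.

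The main subtlety I anticipate concerns boundary and limiting behavior: at $\beta = 1$ one expects $R^1_\vecx$ to coincide with the support of $M_\vecx$ so that $LD^1(\vecx \mid P_\vecX) = D(\vecx \mid P_\vecX)$, while as $\beta \to 0^+$ the region $R^\beta_\vecx$ should shrink toward $\vecx$ and the local depth approach its maximum value. Making these limits rigorous, particularly when $M_\vecx$ has atoms or when its level sets are not continuous in $\alpha$, requires regularity assumptions on $P_\vecX$ together with a consistent tie-breaking convention for the intersection defining $R^\beta_\vecx$. These same difficulties are precisely what motivate integrating over $\beta$ in the forthcoming $\beta$-ILD construction, which is designed to be robust to the precise choice of locality parameter.
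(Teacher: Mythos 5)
Your reading is correct: the statement is Definition 2.2 ($\beta$-local depth), taken from Paindaveine and Van Bever, and the paper accordingly supplies no proof for it. Your well-posedness checks (central symmetry of the mixture about $\vecx$, symmetry and nonemptiness of $R^\beta_\vecx$, and $P_\vecX(R^\beta_\vecx)=M_\vecx(R^\beta_\vecx)\geq\beta>0$ justifying the conditioning) are sound and consistent with how the paper and its cited reference treat the construction.
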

In practice, the sample version of $\beta$-LD is required. Consider a point $\vecz$ and a dataset $\mathbf{X}$ with $n$ observations. To compute its sample $\beta$-LD, we construct the set  
$$\mathbf{X}_{Rz} = \mathbf{X} \cup \{ 2\vecz - \vecx_j \, | j = 1, 2, \ldots, n\},$$ 
and note that $\vecz$ is the depth median of $\mathbf{X}_{Rz}$ due to its central symmetry.

\begin{thm}[$\beta$-neighborhood and sample $\beta$-local depth] \label{def:betanbr}
The $\beta$-neighborhood of $\vecz$, denoted as $\vecN^\beta_\vecz$, is defined as the set of $\lceil n\beta \rceil$ points $\vecx_i \in \mathbf{X}$ that have the highest depth values relative to $\mathbf{X}_{Rz}$, i.e., the points with the largest $D\left(\vecx_i \mid \mathbf{X}_{Rz}\right)$. Here, $\lceil \cdot \rceil$ denotes the ceiling function.
Formally, let $P^{n}_\vecX$ represent the empirical distribution of the points in $\mathbf{X}$, and $P^{\beta n}_{\vecz}$ represent the empirical distribution of points in the $\beta$-neighborhood of the point $\vecz$. The sample $\beta$-LD function can be written as 
\begin{equation}
  LD^\beta(\vecz \mid P^n_\vecX) =D(\vecz \mid P^{\beta n}_{\vecz}).
\end{equation}

\end{thm}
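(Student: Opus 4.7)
Strictly speaking, Definition~\ref{def:betanbr} is a definition rather than a claim, so what requires justification is its implicit well-definedness and its coherence with the population-level $\beta$-local depth (Definition~2.2). The plan is therefore threefold: (i) verify the remark made just before the definition that $\vecz$ is the depth median of $\mathbf{X}_{Rz}$; (ii) verify that selecting the $\lceil n\beta\rceil$ sample points with highest depth in $\mathbf{X}_{Rz}$ reproduces the empirical analog of $R^\beta_\vecx$; (iii) indicate how consistency with the population definition would follow.

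For (i), the construction $\mathbf{X}_{Rz}=\mathbf{X}\cup\{2\vecz-\vecx_j : j=1,\dots,n\}$ makes the empirical measure on $\mathbf{X}_{Rz}$ centrally symmetric about $\vecz$, because each reflected point $2\vecz-\vecx_j$ is paired with its original $\vecx_j$ about $\vecz$. Invoking property P2 from Definition~\ref{def:DATADEPTH} applied to this empirical measure gives $D(\vecz\mid\mathbf{X}_{Rz})=\sup_{\vecy}D(\vecy\mid\mathbf{X}_{Rz})$, which is the median claim, with no further computation needed.

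For (ii), applying the general Definition~2.1 with $P$ replaced by the empirical measure on $\mathbf{X}_{Rz}$, the empirical $\beta$-central region is the smallest depth upper level set carrying empirical probability at least $\beta$. Ranking the $2n$ points of $\mathbf{X}_{Rz}$ by $D(\cdot\mid\mathbf{X}_{Rz})$ in decreasing order and selecting the top $\lceil 2n\beta\rceil$ describes this region exactly. By the central symmetry argument above, each reflected point $2\vecz-\vecx_j$ inherits the same depth value as its partner $\vecx_j$ (this uses P1 applied to the reflection through $\vecz$, which is a similarity transformation leaving $\mathbf{X}_{Rz}$ invariant). Restricting to original sample points therefore yields $\lceil n\beta\rceil$ elements, recovering the stated neighborhood size. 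The conditional empirical distribution $P^{\beta n}_{\vecz}$ is then uniform on $\vecN^\beta_\vecz$, the direct empirical analog of the population conditional $P^\beta_\vecx$. A subtle but unavoidable caveat is how to handle ties at the critical depth level, for which a fixed tie-breaking rule (e.g., lexicographic order on the sample index) should be made explicit.

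The main obstacle, were one to strengthen this justification into a proper consistency theorem (sample $\beta$-LD converging to population $\beta$-LD as $n\to\infty$), is controlling the convergence of empirical depth upper level sets to their population counterparts uniformly in $\beta$; this is non-trivial near boundary contours where the density may be small or where $P^\beta_\vecx$ is not continuous in $\beta$. The standard route would rely on Glivenko–Cantelli-type results that are already available for halfspace, simplicial, and spatial depths, combined with joint continuity of $D(\vecz\mid\cdot)$ in its distributional argument, together with an appeal to the continuous mapping theorem, rather than any direct calculation.
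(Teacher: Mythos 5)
This statement is a definition, and the paper supplies no proof of it; the only justification given in the text is the one-line remark that $\vecz$ is the depth median of $\mathbf{X}_{Rz}$ by central symmetry, which is exactly your step (i) via P2. Your additional coherence argument in step (ii) goes beyond anything in the paper and is essentially sound: the point reflection $T(\vecy)=2\vecz-\vecy$ maps $\mathbf{X}_{Rz}$ onto itself, so $D(2\vecz-\vecx_j\mid\mathbf{X}_{Rz})=D(\vecx_j\mid\mathbf{X}_{Rz})$ and the empirical $\beta$-central region of the reflected sample, intersected with $\mathbf{X}$, has the stated cardinality $\lceil n\beta\rceil$ (note $\lceil 2n\beta\rceil\in\{2\lceil n\beta\rceil-1,\,2\lceil n\beta\rceil\}$, so the count matches only with the within-pair tie-breaking you flag). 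Two small caveats are worth making explicit: the equal-depth-of-pairs step needs the point reflection to belong to the invariance class $\mathcal{T}$, which holds under the similarity-invariance of Assumption 3.1 but not for an arbitrary $\mathcal{T}$ in Definition 1.1; and the empirical smallest level set of probability at least $\beta$ can strictly exceed $\lceil 2n\beta\rceil$ points when distinct pairs tie at the critical depth, so the definition by count and the definition by level set agree only up to such ties. Your closing paragraph on a full consistency theorem is not required here; that result is what the paper's Theorem 3.1 (quoted from Paindaveine and Van Bever) provides, and it is invoked, not proved, in this paper.
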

The sample $\beta$-LD can be sensitive to small changes in $\beta$, particularly when $\beta$ is small. This sensitivity means that even slight variations in the selection of $\beta$ can lead to significant changes in the local depth values for a given point, potentially affecting the results of applications that utilize this concept. To illustrate this sensitivity, we provide a toy example in Figure \ref{fig:LDlimit}, where the local depth (LD) value of the target point fluctuates dramatically as $\beta$ increases from $0.1$ to $0.3$ \footnote{A Shiny app is provided: \url{https://lytgysrn.shinyapps.io/LDvsILD/}}.

In this example, the depth function used is the projection depth (PD), defined as
\begin{eqnarray}\label{eq:pd}
PD(\vecx\mid\mathbf{X})=\left(1+\sup_{\Vert \boldsymbol{u} \Vert=1}\frac{|\boldsymbol{u}^T \vecx-\text{Med}(\boldsymbol{u}^T\mathbf{X})|}{\text{MAD}(\boldsymbol{u}^T\mathbf{X})}\right)^{-1},
\end{eqnarray}
where $\mathrm{Med}$ stands for the median and $\mathrm{MAD}$ is the median absolute deviation, and 
$\boldsymbol{u}^{T}\mathbf{X}=\{\boldsymbol{u}^{T}\vecx_{1},\ldots,\boldsymbol{u}^{T}\vecx_{n}\}.$
It is computed using Liu and Zuo's exact algorithm for 2D data \cite{liu2013exactly,liu2014computing}. We chose PD because it satisfies the assumptions made in this section, i.e., its values are bounded within $[0, 1]$ and it has all the properties outlined in Definition \ref{def:DATADEPTH}. These characteristics help avoid any potential ambiguities in our discussion.
Moreover, PD provides an efficient approach for defining the $\beta$-neighborhood. Specifically, it can be observed that
 \begin{eqnarray}
     PD(\vecx_q\mid \vecX_{Rz})=\left(1+\sup_{\Vert \boldsymbol{u} \Vert=1}\frac{|\boldsymbol{u}^T \vecx_q-\boldsymbol{u}^T \vecz|}{\text{Med}(|\boldsymbol{u}^T\mathbf{X}-\boldsymbol{u}^T \vecz|)}\right)^{-1}.
 \end{eqnarray}
This formulation shows that PD can define the $\beta$-neighborhood for any point without the need to explicitly construct the reflection data set.

\begin{figure}
    \centering
    \includegraphics[width=1\linewidth]{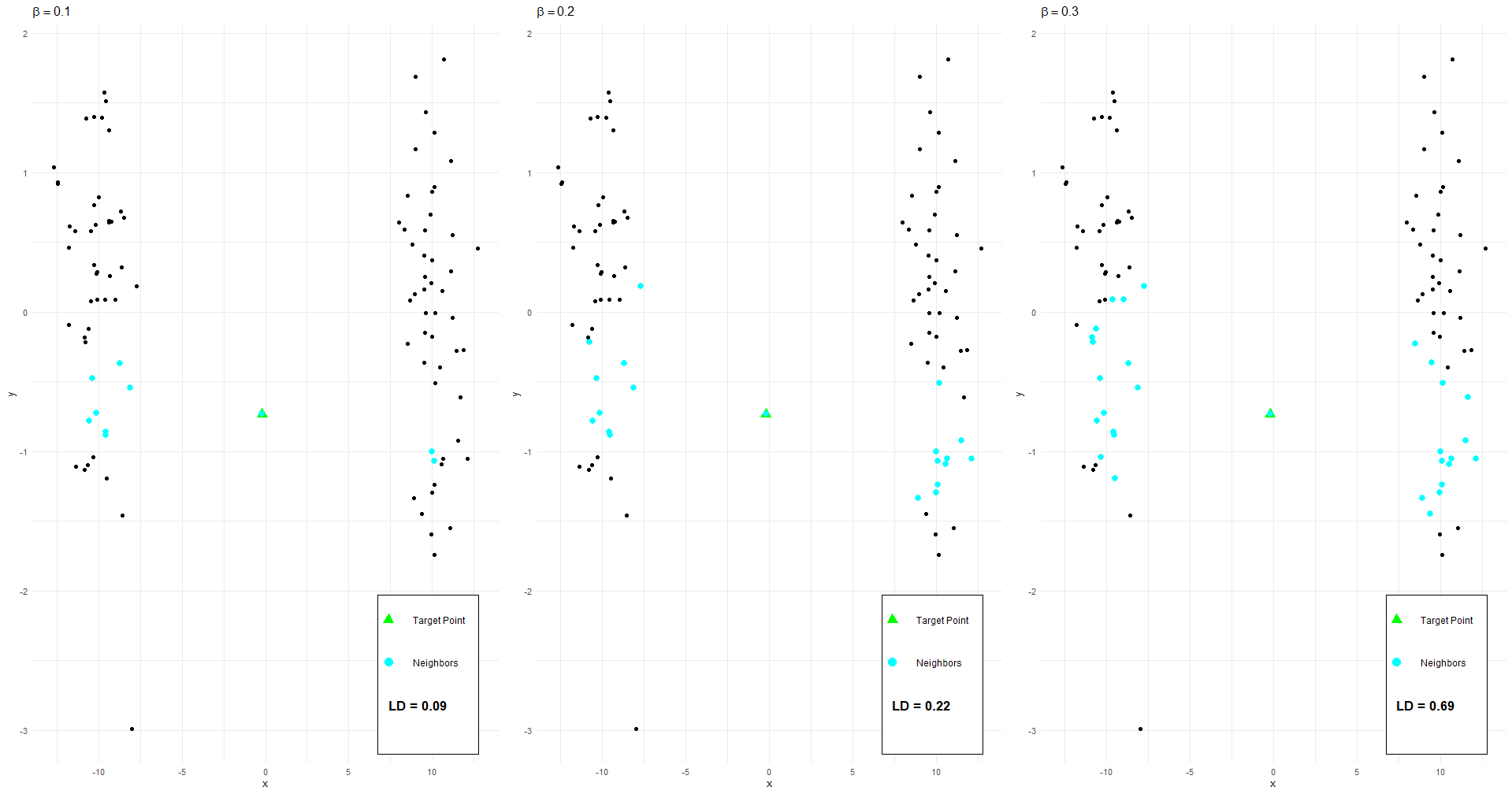}
    \caption{Local projection depth for a selected point in a toy example at three different $\beta$ levels. The LD value changes from $0.09$ to $0.22$ and then to $0.69$ as $\beta$ increases from $0.1$ to $0.2$ and $0.3$, respectively.}
    \label{fig:LDlimit}
\end{figure}
\subsection{Locality integrated local depth}
\textcolor{black}{Instead of treating the locality level $\beta$ as a fixed parameter, we reinterpret the local depth as a function of $\beta$ for a fixed point $\vecx$. That is, we write $LD(\beta \mid \vecx, P_\vecX)$ to emphasize the dependency on $\beta$.} Building on this alternative view of the local depth function, we now introduce a weighted average of the $\beta$-local depth of a point $\vecx$ over a range of locality levels, which we will refer to as $\beta$-integrated local depth.
\begin{thm}[$\beta$-integrated local depth]\label{def:BILD}
For a distribution $ P_\vecX $, the $\beta$-integrated local depth of a point $ \vecx $ is defined as 
    \begin{equation}
            ILD(\vecx \mid P_\vecX, W) = \int_0^1 LD(\beta \mid \vecx, P_\vecX) \, dW(\beta),
    \end{equation}
where $W$ is a probability measure such that $ W((0,1]) = 1 $. If $ W $ is absolutely continuous with respect to the Lebesgue measure on $(0, 1]$, then a density $ w(\beta) $ can be defined such that $ dW(\beta) = w(\beta) d\beta $, and it serves as the weighting function across all values of $\beta$ on the unit interval.
\end{thm}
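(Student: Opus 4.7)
The statement introduces $\beta$-integrated local depth via a Lebesgue--Stieltjes integral, so what needs checking is well-posedness: the integral is finite for every $\vecx \in \mathbb{R}^d$ and every probability measure $W$ on $(0,1]$, and a density $w$ exists whenever $W$ is absolutely continuous with respect to Lebesgue measure. My plan is to dispatch boundedness, then measurability in $\beta$, and finally to read off the density from Radon--Nikodym.

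First I would dispatch boundedness. Since the paper assumes throughout this section that the depth function $D$ takes values in $[0,1]$, and $LD(\beta \mid \vecx, P_\vecX) = D(\vecx \mid P_\vecx^\beta)$ is just the depth of $\vecx$ against a conditional distribution, the integrand is uniformly bounded by $1$ in $\beta$. So the only obstacle to integrability is Borel measurability of the map $\beta \mapsto LD(\beta \mid \vecx, P_\vecX)$.

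Next I would argue measurability. By definition the $\beta$-central regions $R_\vecx^\beta$ of the symmetrized distribution $0.5 P_\vecX + 0.5 P_{2\vecx - \vecX}$ are nested and nondecreasing in $\beta$. This monotonicity propagates to $P_\vecx^\beta(A) = P_\vecX(A \cap R_\vecx^\beta)/P_\vecX(R_\vecx^\beta)$ for every Borel set $A$, so $\beta \mapsto P_\vecx^\beta$ is Borel measurable in the weak topology. Composing with the (at worst semi-)continuity of $P \mapsto D(\vecx \mid P)$ enjoyed by the standard depth functions listed in the introduction then yields measurability of $\beta \mapsto LD(\beta \mid \vecx, P_\vecX)$. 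In the empirical version this is immediate: by Definition \ref{def:betanbr}, $LD(\beta \mid \vecz, P^n_\vecX)$ is piecewise constant in $\beta$ with at most $n$ breakpoints located at $\{1/n, 2/n, \ldots, 1\}$.

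Combining the two ingredients, a bounded Borel-measurable function integrates against any finite measure, so $ILD(\vecx \mid P_\vecX, W)$ is well-defined and lies in $[0,1]$. Existence of $w(\beta)$ in the absolutely continuous case is then the Radon--Nikodym theorem applied on $(0,1]$. The step I expect to be most delicate is making the measurability argument airtight when $P_\vecX$ has atoms: the $\beta$-central regions can jump at countably many values of $\beta$ where mass accumulates on their boundaries, and while this does not break measurability (a monotone curve with countably many jumps remains Borel), one must keep the conditioning operation well-defined on the jump set, typically by decomposing $P_\vecX$ into its atomic and continuous parts. For the empirical setting that drives all the applications in Section \ref{sec:application}, this subtlety disappears and the piecewise-constant structure settles the claim directly.
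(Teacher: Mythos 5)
The statement you were asked about is a \emph{definition} (Definition~\ref{def:BILD}); the paper offers no proof of it, and none is required, so there is no argument of the authors' to compare yours against. What you have supplied is a well-posedness check, and it is essentially sound and consistent with how the paper implicitly handles the issue: boundedness comes from the standing assumption in Section~\ref{Sec: betaLD} that $D$ takes values in $[0,1]$; in the population case the paper works with absolutely continuous $P$ and leans on the remark following Theorem~\ref{thm:consis1} (Lemma~A.1 of Paindaveine and Van Bever), which gives continuity of $\beta \mapsto P(\mathcal{B}\cap R^\beta_\vecx)$ and hence, under Assumption~\ref{assump:depth}, continuity of $\beta\mapsto LD(\beta\mid\vecx,P_\vecX)$ --- stronger than the bare measurability you argue for; and in the sample case the integral collapses to the explicit finite sum $SILD$, exactly as your piecewise-constant observation says. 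Two small corrections to your argument: first, monotonicity of $\beta\mapsto R^\beta_\vecx$ does \emph{not} ``propagate'' to the conditional probabilities $P^\beta_\vecx(A)=P_\vecX(A\cap R^\beta_\vecx)/P_\vecX(R^\beta_\vecx)$, since a ratio of two nondecreasing functions need not be monotone; what you actually need (and what still works) is that numerator and denominator are each monotone, hence Borel, so the ratio is Borel and measurability of $\beta\mapsto P^\beta_\vecx$ follows by checking evaluations on a generating class. Second, the weak (semi-)continuity of $P\mapsto D(\vecx\mid P)$ that you invoke is not automatic for ``standard'' depths without hypotheses --- it is precisely Assumption~\ref{assump:depth} in the paper --- so your argument should be stated as conditional on that assumption rather than as a property enjoyed for free. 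Your digression on atomic $P_\vecX$ is harmless but outside the paper's framework, which assumes absolute continuity wherever the population version is analyzed and treats the empirical case through the discrete $SILD$ formula; the Radon--Nikodym step for the existence of $w$ is, as you say, immediate.
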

Note that the $\beta$-ILD generalizes the $\beta$-LD function by simultaneously considering a range of $\beta$ values. Specifically, if the weighting measure $W$ is degenerate at a single value $B \in (0,1]$, i.e., $W(\{\beta = B\}) = 1$, then the $\beta$-ILD reduces to the $\beta$-LD at locality level $B$. Additionally, if $W$ is concentrated at $\beta = 1$, $\beta$-ILD coincides with the global depth function.

\textcolor{black}{
Let $\vecX = \{\vecx_1,\vecx_2,\ldots,\vecx_n\}$ be a random sample from distribution $P_\vecX$, and denote by $P_\vecX^n$ the corresponding empirical distribution. The sample $\beta$-ILD of a point $\vecx$ with respect to $P_\vecX^n$ is given by
\begin{equation}\notag
SILD(\vecx\mid \vecX, W)=ILD(\vecx \mid P_\vecX^n, W).
\end{equation}
The sample $\beta$-local depth is a step function in $\beta$. Let $\beta_1 = \frac{n_0}{n}$, where $n_0$ is the minimum number of points needed to compute local depth (e.g., $n_0 = 3$ for spatial depth). We define a grid of $\beta$ values as $\beta_i = \beta_1 + \frac{i - 1}{n}$ for $i = 1, \ldots, b$, with $\beta_b = 1$, so that each increment corresponds to adding one point to the neighborhood. For completeness, we set $\beta_0 = \beta_1 - \frac{1}{n}$ as the lower bound of the first integration interval, although no depth is computed at this level. Under this setup, the sample $\beta$-integrated local depth (SILD) takes the form of summation
\begin{eqnarray}
    SILD(\vecx\mid\vecX, W)=\sum_{i=1}^{b}LD(\beta_{i} \mid \vecx, \vecX) \int_{\beta_{i-1}}^{\beta_{i}} w(\beta) \, d\beta.
\end{eqnarray}
where $\int_{\beta_0}^1 w(\beta)d\beta=1$.}
\begin{remark}
    \textcolor{black}{In the sample version of any depth function (i.e., when depth is defined on a finite set of observed points), a minimum of three sample points is required to obtain a nontrivial central‐outward ordering; with fewer than 3 points, ``centrality'' becomes trivial or undefined.} However, specific depth definitions may require much more points for computation. For example, geometric-based depths such as halfspace and simplicial depth require at least $ d+1 $ and $ d+2 $ points, respectively, when working with d dimensional data. 
\end{remark}

\begin{figure}[t]
    \centering
    \includegraphics[width=0.7\linewidth]{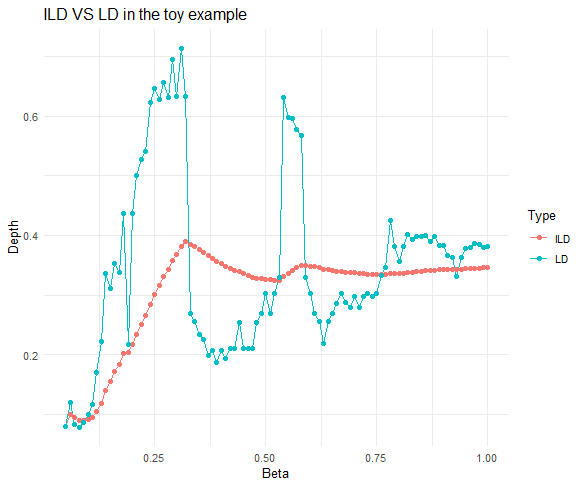}
    \caption{Comparison between $\beta$-ILD and $\beta$-LD across different $\beta$ values (or $B$ in $\beta$-ILD). The depth values are computed for the target point with respect to the data points shown in Figure \ref{fig:LDlimit}. The red line represents the $\beta$-ILD, while the blue line represents the $\beta$-LD.}
    \label{fig:ILDvsLD}
\end{figure}
The $\beta$-ILD adjusts the influence of different locality levels through an appropriate choice of the probability measure $W$ or, more intuitively, the weighting function $ w $. For instance, consider $ w(\beta) $ uniformly distributed over the range $ (B_0, B_1] $
\begin{eqnarray} \label{eq:weight}
    w_B(\beta) = 
\begin{cases} 
\frac{1}{B_1-B_0} & \text{if } B_0 < \beta \leq B_1, \\
0 & \text{otherwise.}
\end{cases}
\end{eqnarray}
In this context, the parameter $B_1$ in $\beta$-ILD serves a role analogous to the parameter $\beta$ in $\beta$-LD, controlling the extent of locality, and unless otherwise specified, $B_0$ is set to the minimal locality level $\beta_0$ as defined above. For a point to exhibit a high $\beta$-ILD value under the weighting function $ w_B $, it should consistently show high local depth across the locality levels $B_0$ up to $ B_1$. This approach ensures that a point is considered locally deep at a certain locality level if it maintains relative centrality across many smaller neighborhoods within that level.
To avoid any potential ambiguity, we refer to the uniformly-weighted $\beta$-ILD when the weighting function $w_B(\beta)$ is used.

The comparison between $\beta$-LD and $\beta$-ILD with the weighting function $w_B(\beta)$ is illustrated in Figure \ref{fig:ILDvsLD}. As shown, the $\beta$-LD curve exhibits substantial fluctuations at certain $\beta$ values, particularly around $\beta = 0.3$ and $\beta = 0.6$. In contrast, the $\beta$-ILD, computed as a weighted integral over all $\beta$-levels, smooths out these variations and yields a more stable assessment of centrality. 

We fix a point $\vecx$ and data set $\vecX$ for the following proposition, which summarizes the smoothing behavior of the integrated local depth.  This allows us to write the sample $\beta$-local depth and $\beta$-integrated local depth (with weighting function $w_B$) simply as $LD(\beta)$ and $ILD(\beta)$. Here, $ILD(\beta)$ means the upper limit $B_1$ in~\eqref{eq:weight} is set to $\beta$. For simplicity, we omit the minimum sample size requirement for depth computation, and let $\beta_i = i/n$ for $i = 1, \ldots, b$, with $B = \beta_b$ as the upper bound of the considered locality range. Under this setup, we have
\begin{pro}\label{pro:1}
For any given sample depth function $ D $, the corresponding $\beta$-integrated local depth serves as a smoothing of the sample $\beta$-local depth in terms of its variability across locality levels. Specifically, define
$$
\Delta = \max_{1 < i \le b} \bigl| LD(\beta_i) - LD(\beta_{i-1}) \bigr| ,\quad\Delta^\dagger
=\max_{1\le i<j\le b}\bigl|LD(\beta_i)-LD(\beta_j)\bigr|
$$
as the maximum jump between successive $\beta$-LD values up to level $B$, and the maximum gap in local depth between any two levels, respectively. Then the corresponding $\beta$-ILD values satisfy
\begin{eqnarray}
    \max_{1 < i \le b} \bigl| ILD(\beta_i) - ILD(\beta_{i-1}) \bigr| &\;\le\;& \tfrac{1}{2} \Delta,\\
   \bigl|ILD(\beta_i)-ILD(\beta_{i-1})\bigr| &\;\le\;& \frac{\Delta^\dagger}{i}.
\end{eqnarray}
\end{pro}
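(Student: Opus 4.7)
The plan is to first reduce the sample $\beta$-ILD to a running arithmetic mean of the $\beta$-LD values at the grid points, and then to obtain both bounds from the simple recursion that results. Since the $\beta$-neighborhood is determined by $\lceil n\beta\rceil$, the sample $\beta$-LD is constant on each interval $(\beta_{j-1},\beta_j]$, where it equals $LD(\beta_j)$. Together with the uniform weighting $w_B$ on $(0,\beta_i]$ (that is, $B_0=0$, $B_1=\beta_i$) and equal step widths $1/n$, direct integration yields
$$ILD(\beta_i)=\frac{1}{\beta_i}\int_0^{\beta_i}LD(\beta)\,d\beta=\frac{1}{i}\sum_{j=1}^{i}LD(\beta_j),$$
so $ILD(\beta_i)$ is literally the running mean of the first $i$ values $LD(\beta_1),\ldots,LD(\beta_i)$.

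Elementary algebra on this running mean produces the recursion
$$ILD(\beta_i)-ILD(\beta_{i-1})=\frac{LD(\beta_i)-ILD(\beta_{i-1})}{i},$$
so both bounds reduce to controlling $|LD(\beta_i)-ILD(\beta_{i-1})|$. Expanding the average representation gives
$$LD(\beta_i)-ILD(\beta_{i-1})=\frac{1}{i-1}\sum_{j=1}^{i-1}\bigl(LD(\beta_i)-LD(\beta_j)\bigr).$$
For the $\Delta^{\dagger}$ bound I would apply the definition of $\Delta^{\dagger}$ termwise to obtain $|LD(\beta_i)-ILD(\beta_{i-1})|\le\Delta^{\dagger}$, which yields $|ILD(\beta_i)-ILD(\beta_{i-1})|\le\Delta^{\dagger}/i$. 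For the $\Delta$ bound I would telescope instead: writing $LD(\beta_i)-LD(\beta_j)=\sum_{k=j+1}^{i}(LD(\beta_k)-LD(\beta_{k-1}))$ gives $|LD(\beta_i)-LD(\beta_j)|\le(i-j)\Delta$, and summing with the identity $\sum_{j=1}^{i-1}(i-j)=i(i-1)/2$ produces $|LD(\beta_i)-ILD(\beta_{i-1})|\le i\Delta/2$, so dividing by $i$ yields the claimed $\Delta/2$.

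Everything is essentially elementary once the running-mean representation is in place; the only point requiring genuine care is the reduction itself, namely verifying that under the convention $\beta_0=0$ with equal grid spacing $1/n$, the normalization $1/\beta_i$ cancels the step widths exactly so as to deliver the clean arithmetic mean. The sharpness of the $\Delta/2$ constant, as opposed to what the cruder bound $|LD(\beta_i)-LD(\beta_j)|\le(b-1)\Delta$ would give, comes entirely from the telescoping step collapsing the double sum via Gauss's formula; this is the only place in the argument where more than monotone summation is exploited.
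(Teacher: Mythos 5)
Your proof is correct and follows essentially the same route as the paper's: both rewrite $ILD(\beta_i)$ as the arithmetic mean $\tfrac{1}{i}\sum_{j\le i}LD(\beta_j)$, express the consecutive difference as an average of the gaps $LD(\beta_i)-LD(\beta_j)$, and then bound these gaps by $(i-j)\Delta$ with Gauss's summation for the $\tfrac12\Delta$ bound and termwise by $\Delta^\dagger$ for the $\Delta^\dagger/i$ bound. The only cosmetic difference is that you phrase the algebra as a running-mean recursion at a general index $i$, whereas the paper computes the difference explicitly at $i=b$ and notes the same reasoning applies to every adjacent pair.
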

That is, the integration significantly reduces the maximum adjacent fluctuation, and although the fluctuation need not drop at every level, it is bounded by a term that tends to zero as the number of locality levels involved increases.
\section{Properties}\label{sec:property}
\newtheorem{property}{Property}
In this section, the key properties of $\beta$-ILD are presented. We begin with Theorem \ref{thm:consis1}, derived from the work of Paindaveine and Van Bever \cite{paindaveine2013depth}, which establishes the consistency of $\beta$-LD for the distribution and its continuity with respect to $\beta$. These properties are foundational for extending the theoretical results of $\beta$-LD to $\beta$-ILD.
Throughout this section, we assume that the probability measure $W$ is absolutely continuous with respect to the Lebesgue measure on $(0,1]$, allowing it to be represented by a bounded and measurable weighting function $w$ on $(0,1]$. For simplicity, the $\beta$-ILD is denoted as $ILD(\vecx \mid P)$, with $W$ fixed throughout the discussion to provide a consistent framework.
To facilitate the subsequent discussion, the following assumption is made regarding the depth function $D(\vecx \mid P)$ used for local depth:
\begin{assumption}\label{assump:depth} The depth function $D(\vecx \mid P)$ satisfies the properties listed in Definition \ref{def:DATADEPTH}, where $\mathcal{T}$ represents the family of similarity transformations, and exhibits weak continuity, i.e., $|D(\vecx \mid P^n) - D(\vecx \mid P)| \to 0$ as $P^n$ converges weakly to $P$. \end{assumption}
\begin{theorem}[Consistency of $\beta$-LD \cite{paindaveine2013depth}]\label{thm:consis1}
      Let $\vecX \in \mathbb{R}^d$ be a random vector, and let $P$ be an absolutely continuous distribution. Suppose a depth function $ D(\cdot \mid P)$ satisfies Assumption \ref{assump:depth} . Then, for any sequence $\beta_n \to \beta$, the $\beta$-local depth function satisfies
    \[
    |LD^{\beta_n}(\vecx \mid P^{n}) - LD^{\beta}(\vecx \mid P)| \xrightarrow[n \to \infty]{a.s.} 0.
    \]
\end{theorem}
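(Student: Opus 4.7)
The plan is to reduce the claim to a weak-convergence statement for the conditioning measures and then invoke the weak continuity of $D$ granted by Assumption~\ref{assump:depth}. Write $Q := 0.5\,P_{\vecX} + 0.5\,P_{2\vecx - \vecX}$ for the reflected mixture and $Q^n$ for its empirical analogue; by construction $LD^{\beta}(\vecx\mid P)=D(\vecx\mid P_{\vecx}^{\beta})$, where $P_{\vecx}^{\beta}$ is $Q$ restricted and renormalized to the smallest central region $R^\beta_\vecx$ of $Q$-mass at least $\beta$. It therefore suffices to show $P_{\vecx}^{\beta_n,n}\xrightarrow{w} P_{\vecx}^{\beta}$ almost surely, since weak continuity of $D$ then delivers the stated convergence via the triangle inequality.

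First I would establish $Q^n\xrightarrow{w} Q$ almost surely. This follows from the multivariate Glivenko--Cantelli-type result applied to $P^n$ together with the continuous mapping theorem (the reflection $\mathbf{y}\mapsto 2\vecx-\mathbf{y}$ is a homeomorphism); absolute continuity of $P$ transfers to $Q$. By Assumption~\ref{assump:depth}, weak continuity of $D$ then gives $D(\cdot\mid Q^n)\to D(\cdot\mid Q)$ pointwise almost surely.

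The main step, and the principal obstacle, is the convergence of the central regions $R^{\beta_n}_{\vecx,n}\to R^\beta_\vecx$. Introduce the depth threshold $\alpha(\beta,Q)=\inf\{\alpha: Q(\{\mathbf{y}: D(\mathbf{y}\mid Q)\ge \alpha\})\ge \beta\}$. Two ingredients are needed: (i) the pointwise convergence of depth functions just established, and (ii) joint continuity of $(\beta,Q)\mapsto \alpha(\beta,Q)$ at the population limit. For (ii) I would use the absolute continuity of $Q$, which guarantees that the distribution of $D(\cdot\mid Q)$ under $Q$ has no atoms at the relevant level, so the depth-quantile function $\beta\mapsto\alpha(\beta,Q)$ is continuous and stable under perturbations of $Q$. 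Combining (i) and (ii) yields $\alpha(\beta_n,Q^n)\to\alpha(\beta,Q)$ a.s., and a Portmanteau-type argument using that $Q$ puts zero mass on the boundary $\{D(\cdot\mid Q)=\alpha(\beta,Q)\}$ gives $Q(R^{\beta_n}_{\vecx,n}\,\triangle\,R^\beta_\vecx)\to 0$ almost surely.

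With the region convergence in hand, weak convergence of the normalized restrictions $P_{\vecx}^{\beta_n,n}$ to $P_{\vecx}^{\beta}$ follows by applying the definition of weak convergence to bounded continuous test functions multiplied by the region indicators, together with the mass lower bound $Q^n(R^{\beta_n}_{\vecx,n})\to\beta>0$. A final invocation of weak continuity of $D$ closes the argument. The delicate point is step (ii): absent absolute continuity the depth-quantile map can jump, and the central regions need not be stable under the joint perturbation $(\beta_n,Q^n)\to(\beta,Q)$; the absolute-continuity hypothesis on $P$ is exactly what rules this out.
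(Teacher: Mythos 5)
This theorem is not proved in the paper at all: it is imported from Paindaveine and Van Bever, and the appendix explicitly omits its proof, so your sketch can only be measured against the structure of that original argument, which it broadly mirrors (empirical symmetrized mixture converges, central regions converge, conditioned measures converge weakly, weak continuity of $D$ finishes). Within that outline, however, there are two genuine gaps. Your ingredient (ii) asserts that absolute continuity of $P$ (hence of $Q$) makes the depth-quantile map $\beta\mapsto\alpha(\beta,Q)$ continuous and the level set $\{\mathbf{y}: D(\mathbf{y}\mid Q)=\alpha(\beta,Q)\}$ $Q$-null. Absolute continuity of $Q$ does not by itself prevent $D(\cdot\mid Q)$ from being constant on a set of positive Lebesgue measure; in that case the push-forward of $Q$ under $D(\cdot\mid Q)$ has an atom, the quantile map jumps, and the smallest central region of mass at least $\beta$ is unstable in exactly the way you need to exclude. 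Ruling this out requires additional structure on $D$ beyond Assumption 1 and the absolute continuity of $P$ --- e.g.\ continuity of $\mathbf{y}\mapsto D(\mathbf{y}\mid Q)$ together with quasi-concavity and strict monotonicity of the depth (the kind of hypotheses the paper itself invokes, via Dyckerhoff's results, when proving continuity in $\vecx$), or the specific regularity conditions imposed in the source paper.

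The second gap is the passage from (i) and (ii) to $\alpha(\beta_n,Q^n)\to\alpha(\beta,Q)$ and $Q\bigl(R^{\beta_n}_{\vecx,n}\,\triangle\,R^\beta_\vecx\bigr)\to 0$. Pointwise convergence $D(\mathbf{y}\mid Q^n)\to D(\mathbf{y}\mid Q)$, which is all that the weak-continuity clause of Assumption 1 provides, gives no control over $Q^n(\{D(\cdot\mid Q^n)\ge\alpha\})$ nor over the super-level sets themselves: level sets of a pointwise-convergent sequence of functions can behave very badly. The standard route is uniform (or locally uniform) convergence $\sup_{\mathbf{y}}|D(\mathbf{y}\mid Q^n)-D(\mathbf{y}\mid Q)|\to 0$, or Hausdorff convergence of the central regions, both strictly stronger than what you assume and both requiring depth-specific arguments (they do hold for halfspace, projection and spatial depth, but that must be established, not cited from the assumption). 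As written, the central-region convergence --- the step you yourself identify as the heart of the proof --- is asserted rather than proved, so the argument is incomplete at precisely its critical point. The final step (testing against continuous functions times indicators) is fine once the boundary is known to be $Q$-null, but that again circles back to the unproved first point.
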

\begin{remark}
    Lemma A.1 from the same work shows that the mapping $\beta \to P(\mathcal{B} \cap R^\beta_\vecx)$ is continuous for any Borel set $\mathcal{B}$.
\end{remark}
From Theorem \ref{thm:consis1}, the consistency of $\beta$-ILD can be established using the Dominated Convergence Theorem (DCT), a widely employed method in proving the consistency of local or global depth functions involving integrals (see \cite{Piana2022,nagy2016integrated}).

\begin{property}[Consistency]\label{prop:consis}
    Let $\vecX \in \mathbb{R}^d$ be a random vector, and let $P$ be an absolutely continuous distribution. Suppose a depth function $ D(\cdot \mid P) $ satisfies Assumption \ref{assump:depth}. Then the $\beta$-integrated local depth function satisfies
    \[
    |ILD(\vecx \mid P^{n}) - ILD(\vecx \mid P)| \xrightarrow[n \to \infty]{a.s.} 0.
    \]
\end{property}
In addition to distributional consistency, it is equally important to analyze the continuity of the depth function with respect to the point $\vecx$.
\begin{property}[Continuity as a function of $\vecx$]\label{prop:ctn}
  Let $\vecX \in \mathbb{R}^d$ be a random vector, and let $P$ be an absolutely continuous distribution. Suppose a quasi-concave depth function $ D(\cdot \mid P) $ satisfies Assumption \ref{assump:depth}. Then, both $\beta$-local depth function $LD^\beta(\cdot|P)$ and $\beta$-integrated local depth function $ILD(\cdot|P)$ are continuous as a function of $\vecx$ with a fixed probability measure $W$ throughout all positions.
\end{property}
With the consistency and continuity of $\beta$-ILD established for certain depth functions under absolutely continuous distribution $P$, the next step is to examine whether the properties outlined in Definition \ref{def:DATADEPTH} hold for local depth functions. If these properties do not directly apply, we will identify suitable alternatives that align with the characteristics of local depth functions.

For the P1, if a depth function $ D $ is invariant under any transformation $ T \in \mathcal{T} $, the $\beta$-local depth will naturally retain this invariance at any locality level. Specifically, for any transformation $ T \in \mathcal{T} $, we have
\[
D(T(\vecx) \mid P^\beta_{T(\vecx)}) = D(\vecx \mid P^\beta_{\vecx}).
\]
Moreover, integrating over $\beta$ does not affect this invariance, meaning that the $\beta$-integrated local depth also retains this property.
\begin{property}[$\mathcal{T}$-invariance]\label{prop:inva}
    If a depth function $ D $ is invariant under any transformation $ T \in \mathcal{T} $, and the probability measure $W$  remains unchanged under any transformation of $P$ then the $\beta$-integrated local depth is also invariant for all $ T \in \mathcal{T} $. That is,
    \[
    ILD(T(\vecx) \mid P_{T(\vecX)}) = ILD(\vecx \mid P_{\vecX}).
    \]
\end{property}

Local depth functions focus primarily on local centrality, meaning they generally do not define a global depth median, and thus properties P2 and P3 are not necessarily satisfied. Instead, the centrality of $\beta$-LD emerges at extreme locality levels, as emphasized by Paindaveine and Van Bever \cite{paindaveine2013depth}. When the local depth function is continuous with respect to $\beta$, there is
\[
\lim\limits_{B \to 0} \int_0^B LD(\beta) \, dW(\beta) = \lim\limits_{\beta \to 0} LD(\beta),
\]
where $ W(0, B] = 1 $. For simplicity, we denote $ \int_0^B LD(\beta) \, dW(\beta)$ as $ ILD^B(\vecx \mid P) $ in latter expression.

\begin{property}[Centrality in extreme locality level]\label{prop:central}
    Assume a depth function \(D\), satisfying the properties in Definition \ref{def:DATADEPTH}, is uniformly weakly continuous (as defined by \(Q_1^+\) in \cite{paindaveine2013depth}), meaning that for any two sequences of absolutely continuous distributions \((P_n)\) and \((P_n')\) such that
    \[
    |P_n(\mathcal{B}) - P_n'(\mathcal{B})| \to 0 \quad \text{for any Borel set } \mathcal{B},
    \quad \text{implies} \quad
    |D(\vecx \mid P_n) - D(\vecx \mid P_n')| \to 0 \quad \text{for any } \vecx \in \mathbb{R}^d.
    \]
    Suppose further that the distribution \(P\), with probability density function \(f\), is absolutely continuous and centrally symmetric about \(\vectheta\). If the depth function attains its maximum at \(\vectheta\), and this maximum value \(c_D\) is independent of the specific distribution \(P\), then for any \(\vecx \in \mathrm{Supp}_{+}(f) = \{\vecx \mid f(\vecx) > 0\}\), we have
    \[
    \lim\limits_{B \to 0} ILD^B(\vecx \mid P) = \lim\limits_{\beta \to 0} LD^\beta(\vecx \mid P) \to c_D.
    \]
\end{property}
Note that the above conditions are satisfied by many commonly used depth functions, such as the halfspace depth ($c_D = 0.5$) and projection depth ($c_D = 1$).

For the vanishing-at-infinity property, we formalize the fact that if the local depth vanishes at infinity, then so does the integrated local depth. We further summarize several widely used depth functions whose $\beta$-LD satisfies this property.
\begin{property}[Vanishing at infinity] \label{prop:vanish}
If, for any distribution $P$ whose mass does not escape to infinity, the local depth $LD^\beta(\vecx \mid P) \to 0$ as $\|\vecx\| \to \infty$ for all $\beta\in(0, 1]$, then the integrated local depth $ILD^B(\vecx \mid P) \to 0$ as $\|\vecx\| \to \infty$ for any $B\in(0, 1]$.
\end{property}
\begin{pro}\label{pro:2}
For any distribution $P$ whose mass does not escape to infinity, we have $LD^\beta(\vecx \mid P) \to 0$ as $\|\vecx\| \to \infty$ for many widely used depth functions. Following the classification of depth function types by Zuo and Serfling~\cite{zuo2000general}, notable examples include Type~A and Type~D depths based on probability content, such as simplicial depth and halfspace depth, Type~B depths based on the expectation of a measure of dispersion, such as $L^p$ depth, and spatial depth.
\end{pro}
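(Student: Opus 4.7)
The plan is to show for each listed depth that the conditional distribution $P_\vecx^\beta$ retains a fraction of its mass arbitrarily close to one inside a ball independent of $\vecx$, and then combine this uniform tightness with the structural feature of the depth (probability content, expected distance, or expected direction). The preliminary fact is that $R^\beta_\vecx$ is centrally symmetric about $\vecx$ by construction, so $P(R^\beta_\vecx) = P_{2\vecx-\vecX}(R^\beta_\vecx)$ and hence $P(R^\beta_\vecx)\ge\beta$. Since the mass of $P$ does not escape to infinity, for any $\epsilon>0$ one can pick $r>0$ with $P(B(0,r))\ge 1-\epsilon$; a short inclusion-exclusion step then yields $P(B(0,r)\cap R^\beta_\vecx)\ge P(R^\beta_\vecx)-\epsilon$, whence $P_\vecx^\beta(B(0,r))\ge 1-\epsilon/\beta$.

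With this uniform tightness in hand, I would dispatch the four cases as follows. For halfspace depth, once $\|\vecx\|>r$ one can pick a halfspace $H$ containing $\vecx$ and disjoint from $B(0,r)$, so $P_\vecx^\beta(H)\le\epsilon/\beta$; since halfspace depth is bounded by the mass of any halfspace containing $\vecx$, the local depth tends to $0$. For simplicial depth, the convex set $H^c$ contains $B(0,r)$ but not $\vecx$, so any simplex with all $d+1$ vertices in $H^c$ cannot contain $\vecx$; a union bound then gives the local simplicial depth $\le(d+1)\epsilon/\beta$. For $L^p$ depth, restricting to $\vecy\in B(0,r)$ gives $E_{P_\vecx^\beta}[\|\vecx-\vecy\|_p]\ge(\|\vecx\|-r)(1-\epsilon/\beta)\to\infty$, so the depth vanishes. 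For spatial depth, projecting $E_{P_\vecx^\beta}[(\vecx-\vecy)/\|\vecx-\vecy\|]$ onto the unit vector $\vecx/\|\vecx\|$ produces a scalar whose contribution from $\vecy\in B(0,r)$ tends to $1$ uniformly and whose complementary contribution is at most $\epsilon/\beta$ in absolute value, forcing the norm of the expectation to $1$ and hence the depth to $0$.

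The main obstacle, which all four cases rest on, is the uniform tightness step: even as $\vecx$ drifts to infinity, $P_\vecx^\beta$ must remain concentrated on a bounded set independent of $\vecx$, otherwise the local neighborhood could pathologically concentrate in a thin slice near $\vecx$ and invalidate every depth-specific bound above. The symmetry-based inequality $P(R^\beta_\vecx)\ge\beta$ is what unlocks this uniformity; once it is established, the remaining work is routine depth-by-depth bookkeeping, first taking $\|\vecx\|\to\infty$ and then sending $\epsilon\to 0$.
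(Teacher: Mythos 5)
Your proof is correct, but it takes a genuinely different route from the paper's. The paper's central device for the probability-content and Type~B depths is the inequality $LD^\beta(\vecx\mid P)=D(\vecx\mid P^\beta_\vecx)\le \tfrac{1}{\beta}\,D(\vecx\mid P)$, obtained by bounding the conditional probability of any halfspace (or the conditional expectation) by $1/\beta$ times its unconditional counterpart; this reduces the local statement to the already-assumed global vanishing-at-infinity property P4. For spatial depth the paper decomposes the global expectation of the unit vector into its restriction to $R^\beta_\vecx$ plus the complement and uses the triangle inequality to force the norm of the local expectation to $1$. You instead prove a single uniform-tightness lemma, $P^\beta_\vecx(B(0,r))\ge 1-\epsilon/\beta$ with $r$ independent of $\vecx$, resting on the symmetry argument $P(R^\beta_\vecx)\ge\beta$ that the paper also uses implicitly when it writes $P(R^\beta_{\vecx}(P))=\beta$, and then attack each depth directly from its definition. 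Your route is more self-contained (it does not invoke the global vanishing property as a black box but re-derives it in localized form for each family) and handles all four cases through one common mechanism; the paper's route is shorter where the global property is standard and yields the cleaner quantitative bound $LD^\beta\le D/\beta$. Two small bookkeeping points that do not affect the conclusion: in the $L^p$ case, if $B(0,r)$ is the Euclidean ball you should pass through norm equivalence to replace $\|\vecx\|-r$ by a constant multiple of it, and there a single fixed $\epsilon<\beta$ already suffices without the final limit in $\epsilon$; and the symmetry of $R^\beta_\vecx$ about $\vecx$ requires the depth to be invariant under the reflection $\vecz\mapsto 2\vecx-\vecz$, which is guaranteed by Assumption~1 but worth stating.
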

\subsection{View of partitioned local depth}\label{sec: PALD}
Although $\beta$-LD does not satisfy monotonicity, the monotonicity of the underlying depth function $D$ is reflected in the nested structure of central regions for symmetric distributions. Specifically, if the depth function $D$ is monotone along rays from its deepest point, we have $R^\beta_\vecx \subseteq R^{\beta'}_\vecx$ for $\beta' > \beta$. In applications, such as affine K-nearest neighbor classification \cite{paindaveine2015nonparametrically} and depth-based local center clustering \cite{wang2025depth}, $D(\vecz \mid 0.5P_\vecX + 0.5P_{2\vecx - \vecX})$ serves as a non-symmetric similarity measure from $\vecz$ to $\vecx$ under the distribution $P_\vecX$, and is monotonic along rays from $\vecx$.

While this depth-based similarity measure helps define $R^\beta_\vecx$ for local depth calculations, the $\beta$-LD does not exhibit monotonicity on its own. To illustrate how the monotonicity of depth-based similarity affects the local depth value, we introduce Partitioned Integrated Local Depth (PILD), inspired by the concept of Partitioned Local Depth (PaLD) in LCD \cite{berenhaut2022social}. In simple terms, the PaLD of a point $ \vecz $ relative to another point $ \vecx $ represents the contribution of $ \vecz $ to the local depth value of $ \vecx $.
For $\beta$-ILD, we can define the sample PILD as
\begin{eqnarray}\label{eq:Cohesion_ild}
         PILD(\vecz \mid \vecx, \vecX) =\sum_{i=1}^{b}\frac{\mathbf{1}\{\vecz \in \vecN_\vecx^{\beta_{i}}\} LD(\beta_{i} \mid \vecx,\vecX)}{\lceil n\beta_{i} \rceil}\int_{\beta_{i-1}}^{\beta_{i}}w(\beta)d\beta,
\end{eqnarray}
where $ \mathbf{1}\{\vecz \in \vecN_\vecx^{\beta_{i}}\} $ indicates whether $\vecz$ is in the $\beta_i$-neighborhood of $\vecx$, i.e., $\vecN_\vecx^{\beta_i}$. By default, we take $\beta_b = 1$; if the locality level is constrained within $B$, we set $\beta_b = B$ and denote the variant by $PILD^B$. The denominator $\lceil n\beta_{i} \rceil$ indicates that, for a fixed locality level $\beta_i$, the $\beta$-LD value at $\vecx$ is treated as being equally contributed by the points in the neighborhood $\vecN_\vecx^{\beta_i}$.

To intuitively illustrate the construction of PILD, we begin with an indicator matrix $\mathbb{I}$ derived from the $\beta$-local depth for a given $\beta$ value. Each entry in the matrix, $\mathbb{I}_{ij}$, is either $1$ or $0$, indicating whether the $j$-th observation in the dataset belongs to the $\beta$-neighborhood of the $i$-th observation. The column sums of $\mathbb{I}$ then represent the frequency with which each point appears in the $\beta$-neighborhoods of each of the point in the dataset. A point with high frequency in the $\mathbb{I}$ matrix can be interpreted as locally central. To refine this measure, the $\mathbb{I}$ matrix can be extended by assigning weights to neighborhood memberships. Specifically, the membership of a point $\vecz$ within the neighborhood of a point $\vecx$ is weighted by the local depth of $\vecx$, as neighborhoods centered on points with higher local depth values should carry greater importance in evaluating local centrality. Furthermore, the PILD matrix, denoted as $\mathbb{P}$, with the $ij$-th entry defined as $PILD(\vecx_j \mid \vecx_i, \vecX)$ for any $\vecx_i$ and $\vecx_j$ in $\vecX$, extends the previous idea of weighted neighborhood membership by further considering the relative order of points within the neighborhood of each observation. This refinement incorporates contributions from points based on their ranking within the neighborhood, providing a more detailed characterization of local relationships. To ensure consistency with $\beta$-ILD and interpretability as PaLD, the PILD matrix normalizes contributions so that each row sum equals the $\beta$-ILD value, i.e., $\sum_{\vecz \in \vecX} PILD(\vecz \mid \vecx, \vecX) = ILD(\vecx \mid \vecX)$.

We summarize some properties of PILD analogue to properties in Definition \ref{def:DATADEPTH} for $\beta$-ILD as follows
\begin{enumerate}[(Q1)]
    \item \textbf{$\mathcal{T}$-invariance}:  If the underlying depth function $D(\vecx\mid P)$ is invariant under any transformation $ T \in \mathcal{T} $, then for any points $ \vecz $ and $ \vecx $ in $ \vecX $,
    $
    PILD(T(\vecz) \mid T(\vecx), T(\vecX)) = PILD(\vecz \mid \vecx, \vecX).
    $
    \item \textbf{Maximality at itself}: For any point $ \vecx $, we have $ PILD(\vecz \mid \vecx, \vecX) \leq PILD(\vecx \mid \vecx, \vecX) $ for all $ \vecz \in \vecX \setminus \{\vecx\} $.
    \item  \textbf{Pseudo-monotonicity}: If the depth function $ D $ is monotone on rays, then PILD satisfies pseudo-monotonicity, which is defined as follows: \textcolor{black}{for any two points $\vecz_1,\vecz_2\in\vecX$, if
$
D\bigl(\vecz_1\mid \vecX_{R\vecx}\bigr)\;\ge\;D\bigl(\vecz_2\mid \vecX_{R\vecx}\bigr),
$
then their contributions to the $\beta$-ILD value of $\vecx$ satisfy
$PILD(\vecz_1\mid \vecx,\vecX)\;\ge\;PILD(\vecz_2\mid \vecx,\vecX).
$ In other words, points that are deeper in $\vecX_{R\vecx}$ contribute more to the $\beta$-ILD of $\vecx$.
}
    \item \textbf{Vanishing outside the $B$-neighborhood}: \textcolor{black}{If the weighting function $ w(\beta) $ is zero for all $ \beta > B $, then for any $\vecz\notin\vecN_\vecx^B$, i.e., the largest local neighborhood, we have that $ PILD^B(\vecz \mid \vecx, \vecX) = 0$.}
\end{enumerate}
Those properties of PILD are straightforward to derive from its definition and the properties of $\beta$-ILD.
\section{Applications}\label{sec:application}
Data depth has been extensively applied to classification tasks, such as maximum depth classifiers and DD-plots-based methods \cite{ghosh2005maximum, li2012dd, lange2014fast}, and to outlier detection techniques leveraging its robust center-outward ranking properties \cite{chen2008outlier, jeong2018outlier, mozharovskyi2022anomaly}. We now demonstrate how the proposed $\beta$-ILD and PILD matrix enhance the performance of data depth in these two key applications.

All experiments in this section utilize the sample version of spatial depth function, due to its computational efficiency and its ability to compute $\beta$-LD values across all locality levels effectively. That is given by
\begin{equation}\label{eq:SD}
SD(\vecz\mid \mathbf{X})=
%1-\left\|\mathrm{E}_{\mathbf{X}}\left(\frac{\veczboldsymbol{z}-\mathbf{X}}{\|\vecz \boldsymbol{z}-\mathbf{X} \|_1}\right)\right\|}| =
1-\left\| \sum_{i=1}^n \frac{\vecz-\vecx_i}{n\| \vecz-\vecx_i \|} \right\|,
\end{equation}
where $\vecX=\{\vecx_1,\cdots,\vecx_n\}$.
\subsection{Classification}
The most fundamental depth-based classifier is the max-depth classifier, initially proposed by Liu, Parelius, and Singh in 1999 \cite{liu1999multivariate} and later formally developed by Ghosh and Chaudhuri \cite{ghosh2005maximum}. In essence, given data separated in groups and denoted $\vecX_1, \cdots, \vecX_G$, the max-depth classifier (MD) assigns a new point $\vecz$ to the group for which it achieves the highest depth value, i.e., the assignment is determined by $\argmax_{i=1, \cdots, G} D(\vecz \mid \vecX_i)$. To address more complex classification tasks, advanced classifiers often rely on training separators in the two-dimensional depth-value space known as a DD-plot. The goal is to minimize empirical risk in the training set using a separator, such as a polynomial \cite{li2012dd}, or employing the so-called $\alpha$-procedure, as in the DD$\alpha$ method \cite{lange2014fast}. Beyond directly comparing depth values between different populations, another approach is the depth-based K-nearest neighbor classifier (D-knn) \cite{paindaveine2015nonparametrically}, which is based on the $\beta$-neighborhood from Definition \ref{def:betanbr}. In this approach, a point $\vecz$ is assigned to Group $i$ if its $\vecN_\vecz^\beta$ contains more points from Group $i$ than any other group.

We further propose a new depth-based classification approach based on the PILD matrix, as discussed in Section \ref{sec: PALD}. The PILD matrix is constructed using all points under consideration, regardless of their population or whether their labels are known. For an unlabeled point $\vecz$--the point we want to classify--the proposed PILD-based classifier calculates the average contribution of the point to the local depth values of all points within each population. Specifically, let there be $G$ groups, and denote by $ \vecX_g $ the set of labeled points belonging to the $g$-th group. The classifier assigns the point $ \vecz=\vecx_j $ to the population that maximizes the average PILD contribution, defined as
\begin{eqnarray}
     \hat{m}(\vecx_j) = \argmax_{g \in \{1, 2, \ldots, G\}} \frac{1}{|\vecX_g|}\sum_{i\in\{i: \vecx_i\in\vecX_g\}} \mathbb{P}_{ij},
\end{eqnarray}
where $\mathbb{P}_{ij}=PILD(\vecx_j \mid \vecx_i, \vecX)$ is the $ij$-th entry of the PILD matrix and $|\cdot|$ represents cardinality of a set.

In our study, we evaluate the following depth-based classifiers: the max-depth classifier (MD) and its extensions using $\beta$-LD and $\beta$-ILD (referred to as max LD and max ILD classifiers). For $\beta$-ILD, a uniformly distributed weighting function, as defined in \eqref{eq:weight}, is used. The max ILD classifier includes two variants: a parameter-free version (Full-ILD) when the integration spans all locality levels ($B=1$), and a parameterized version (B-ILD) where the locality parameter $B$ controls the range of integration. We construct the PILD matrix, which similarly leads to two variants of classifiers: a parameter-free version (PILD) when $B=1$ and a parameterized version (B-PILD) with $B$ as the locality parameter. For classifiers based on DD-plots, we apply the DD$\alpha$ method using Full-ILD, $\beta$-LD, B-ILD, PILD, and B-PILD. The DD$\alpha$ method is implemented using the \texttt{ddalpha} R package \cite{pokotylo2016depth} with default configuration. Additionally, we test the D-knn classifier.

For methods requiring parameter selection, $5$-fold cross-validation (CV) is used to determine the optimal values for the locality parameters ($\beta$ for $\beta$-LD and $B$ for B-ILD and B-PILD) and the number of neighbors for D-knn. To provide further comparisons, we also try fixed locality parameters ($0.1$, $0.3$, and $0.5$) for $\beta$-LD and $\beta$-ILD.

\begin{figure*}[t]
     \centering
     \begin{subfigure}[H!]{0.24\textwidth}
         \centering
		\includegraphics[width=\textwidth]{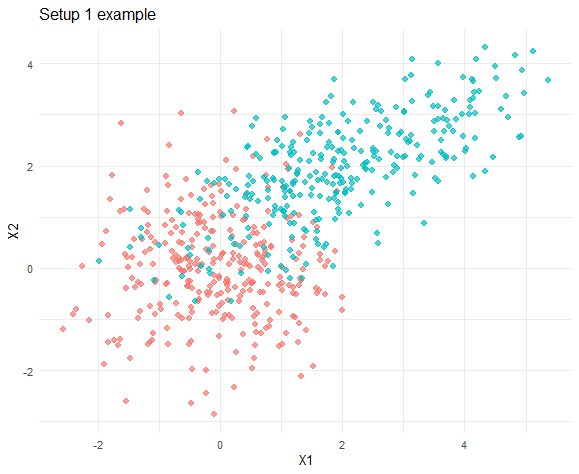}
         \caption{}
     \end{subfigure}
     \hfill
     \begin{subfigure}[H!]{0.24\textwidth}
         \centering
         \includegraphics[width=\textwidth]{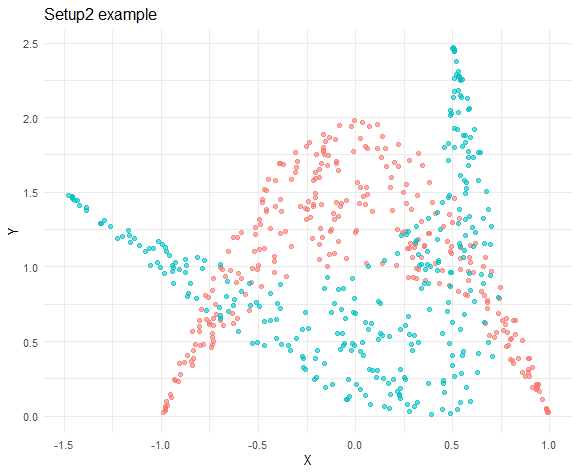}
         \caption{}
     \end{subfigure}
     \hfill
          \begin{subfigure}[H!]{0.24\textwidth}
         \centering
         \includegraphics[width=\textwidth]{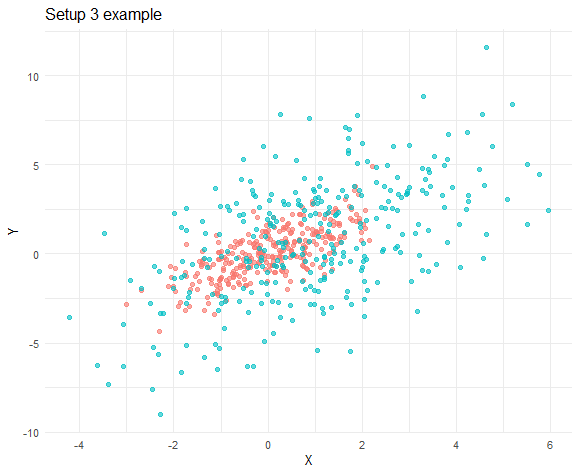}
         \caption{}
     \end{subfigure}
      \hfill
          \begin{subfigure}[H!]{0.24\textwidth}
         \centering
         \includegraphics[width=\textwidth]{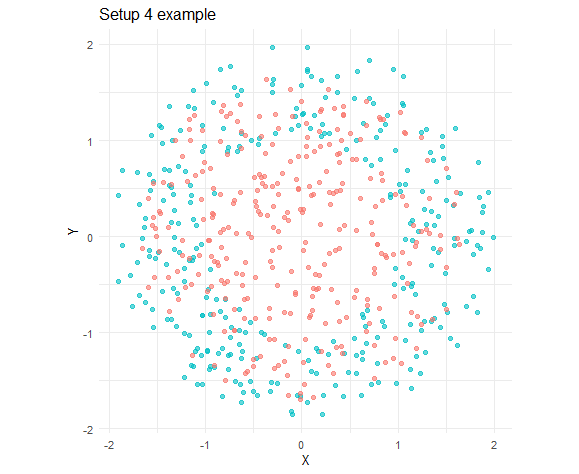}
         \caption{}
     \end{subfigure}
     \caption{Graphs of examples for four setups. $(a)$: Setup $1$; $(b)$: Setup $2$; $(c)$: Setup $3$. $(d)$: Setup $4$.}
     \label{Fig:sgraph}
\end{figure*}
\textcolor{black}{
We consider four different bivariate distributional setups in $\mathbb{R}^2$, with Setup $2$ adopted from Setup $4$ in Paindaveine and Van Bever~\cite{paindaveine2015nonparametrically}.
 For each setup, we simulate a total of $600$ data points drawn from a mixture of two distributions, $f_0$ and $f_1$, with equal prior probabilities $\pi_0 = \pi_1 = 0.5$. That is, each class contributes $300$ points per setup. We randomly select $n_{\text{train}} = 200$ training samples and $n_{\text{test}} = 100$ test samples from each class, yielding $400$ training and $200$ test points in total. This process is repeated $100$ times independently for each setup. Examples of each setup are illustrated in Figure \ref{Fig:sgraph}, with further details summarized below.}
\begin{enumerate}[\textbf{Setup} 1]
    \item \textbf{Bivariate normal distributions.} $ f_i $, $i=0,1$, is the p.d.f. of the normal distribution with mean vector $\mu_i$ and covariance matrix $\Sigma_i$ :
    \[
    \mu_0 = \begin{pmatrix} 0 \\ 0 \end{pmatrix}, \quad \mu_1 = \begin{pmatrix} 2 \\ 2 \end{pmatrix},
    \quad \Sigma_0 = \begin{pmatrix} 1 & 0 \\ 0 & 1 \end{pmatrix}, \quad \Sigma_1 = \begin{pmatrix} 2 & 1 \\ 0 & 1 \end{pmatrix}.
    \]

    \item \textbf{Uniform distributions on half-moons.} $ f_0 $ and $ f_1 $ represent  uniform distributions over the following half-moon shapes:
\[
\vecX_0 = \begin{pmatrix} U \\ V \end{pmatrix} \quad \text{and} \quad \vecX_1 = \begin{pmatrix} -0.5 \\ 2 \end{pmatrix} + \begin{pmatrix} 1 & 0.5 \\ 0.5 & -1 \end{pmatrix} \begin{pmatrix} U \\ V \end{pmatrix},
\]
where $ U \sim \text{Unif}(-1, 1) $ and $ V \mid [U = u] \sim \text{Unif}(1 - u^2, 2(1 - u^2)) $.

    \item \textbf{Overlapping distributions with different point densities.} $ f_i $, $i=0,1$, is the p.d.f. of the overlapping normal distribution :
    \[
    \mu_0 = \begin{pmatrix} 0 \\ 0 \end{pmatrix}, \quad \mu_1 = \begin{pmatrix} 1 \\ 1 \end{pmatrix},
    \quad \Sigma_0 = \begin{pmatrix} 1 & 1 \\ 1 & 2 \end{pmatrix}, \quad \Sigma_1 = \begin{pmatrix} 4 & 4 \\ 4 & 12 \end{pmatrix}.
    \]

    \item \textbf{Ring and ball within the ring.}
\begin{itemize}
    \item $ f_0 $: A uniform distribution over a ring with an inner radius of $ 1 $ and an outer radius of $ 2 $, i.e., $\{\vecx\in\mathbb{R}^{2}:1\leq\|\vecx\|\leq2\}$.
    \item $ f_1 $: A mixture distribution composed of two concentric balls with different radii:
    \begin{itemize}
        \item A large ball with radius $ r_{\text{ball}} = 1.7 $ and prior probability $ 0.7 $.
        \item A small ball with radius $ r_{\text{ball}} = 1 $ and prior probability $ 0.3 $.
    \end{itemize}
    Points are generated in polar coordinates as $ (r \cos(\theta), r \sin(\theta)) $, where $ r \sim \text{Unif}(0, r_{\text{ball}}) $ and $ \theta \sim \text{Unif}(0, 2\pi) $.
\end{itemize}
\end{enumerate}

\begin{figure*}[t]
     \centering
     \begin{subfigure}[H!]{0.47\textwidth}
         \centering
		\includegraphics[width=\textwidth]{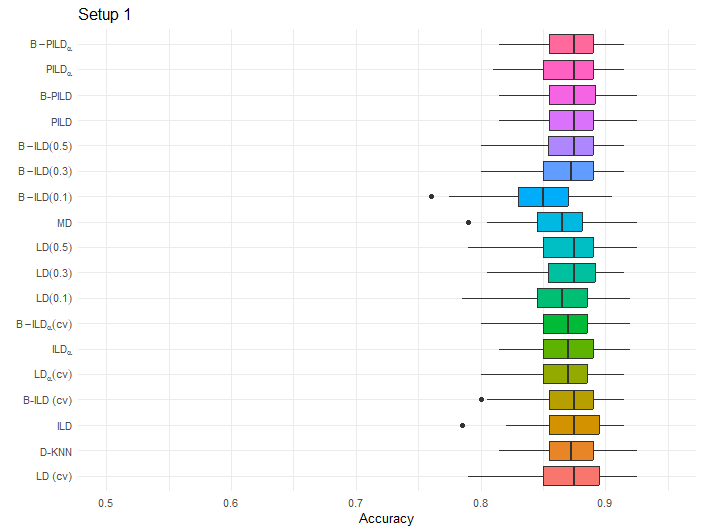}
         \caption{}
     \end{subfigure}
     \hfill
     \begin{subfigure}[H!]{0.47\textwidth}
         \centering
         \includegraphics[width=\textwidth]{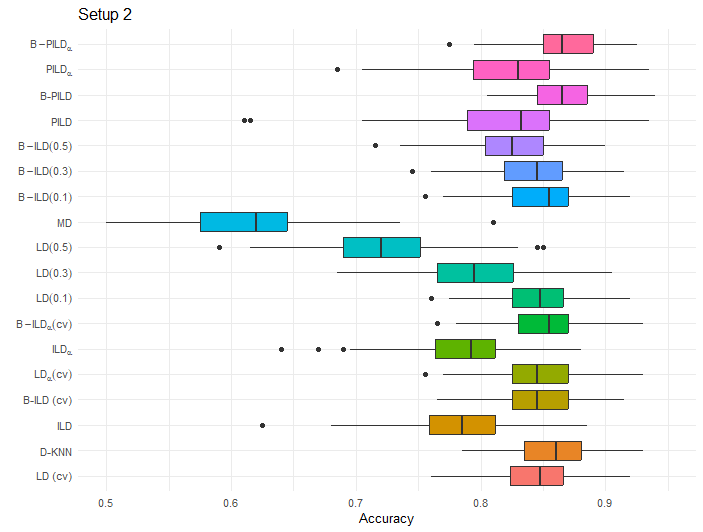}
         \caption{}
     \end{subfigure}
     \hfill
          \begin{subfigure}[H!]{0.47\textwidth}
         \centering
         \includegraphics[width=\textwidth]{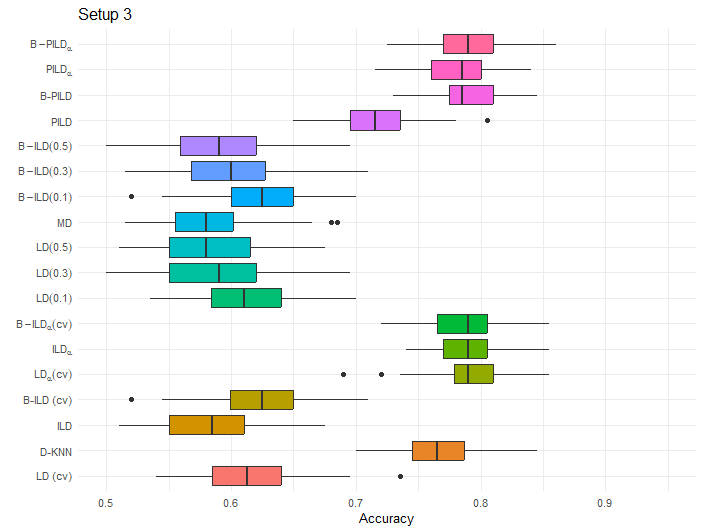}
         \caption{}
     \end{subfigure}
      \hfill
          \begin{subfigure}[H!]{0.47\textwidth}
         \centering
         \includegraphics[width=\textwidth]{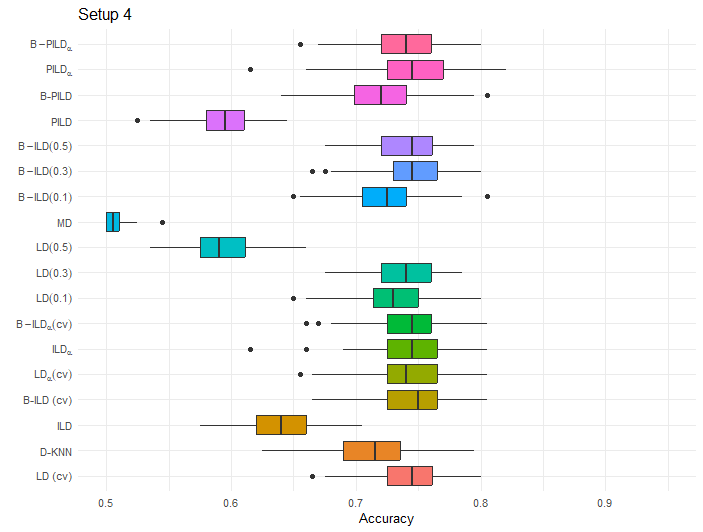}
         \caption{}
     \end{subfigure}
     \caption{Boxplots of classifier accuracies for $4$ setups based on $100$ simulations. Classifiers include: max depth (MD), max LD with $\beta$ set to $0.1, 0.3, 0.5$ and cross-validated (LD($0.1$), LD($0.3$), LD($0.5$), LD(cv)), max Full-ILD (ILD), max ILD with $B$ set to $0.1, 0.3, 0.5$ and cross-validated (B-ILD($0.1$), B-ILD($0.3$), B-ILD($0.5$), B-ILD(cv)), PILD-based classifiers (PILD and B-PILD). DD$\alpha$ classifiers established on some above-mentioned methods (LD$\alpha$(cv), ILD$\alpha$, B-ILD$_\alpha$(cv), PILD$_\alpha$, B-PILD$_\alpha$), and the D-knn with cross-validated $k$ neighbors (D-KNN). The median values of $\beta_{\text{cv}}$, $B^1_{\text{cv}}$ for B-ILD, $B^2_{\text{cv}}$ for B-PILD, and $k_{\text{cv}}$ for each setup are: $\beta_{\text{cv}} = 0.325, 0.1, 0.125, 0.15$, $B^1_{\text{cv}} = 0.5, 0.15, 0.1, 0.2$, $B^2_{\text{cv}} = 0.485, 0.281, 0.389, 0.168$ and $k_{\text{cv}} = 16, 11, 11, 17$. }
     \label{Fig:boxplots}
\end{figure*}

\begin{table*}[ht]
\caption{Mean accuracies of classifiers for each setup and average ranks across all methods.}
\label{Tab:accuracy}
\centering
%\scalebox{0.9}[0.9]{
\begin{adjustbox}{max width=1.1\textwidth}
\begin{tabularx}{1.5\textwidth}{l *{18}{>{\centering\arraybackslash}X}}
\toprule
 Setup &  \shortstack{LD\\ (cv)} & \shortstack{D-KNN} &  \shortstack{ILD} & \shortstack{B-ILD\\ (cv)} &  \shortstack{LD$_\alpha$\\ (cv)} &  \shortstack{ILD$_\alpha$} &  \shortstack{B-ILD$_\alpha$\\ (cv)} &  \shortstack{LD\\ (0.1)} &  \shortstack{LD\\ (0.3)} &  \shortstack{LD\\ (0.5)} &  \shortstack{MD} &  \shortstack{B-ILD\\ (0.1)} &  \shortstack{B-ILD\\ (0.3)} &  \shortstack{B-ILD\\ (0.5)}  &  \shortstack{PILD}  &  \shortstack{B-PILD} &  \shortstack{PILD$_\alpha$} &  \shortstack{B-PILD$_\alpha$}\\
\midrule
Setup 1 & $0.871$ & $0.871$ & $0.872$ & $0.872$ & $0.870$ & $0.871$ & $0.868$ & $0.863$ & $0.871$ & $0.872$ & $0.864$ & $0.849$ & $0.870$ & $0.871$ & $0.873$ & $0.874$ & $0.871$ & $0.871$\\
Setup 2 & $0.846$ & $0.859$ & $0.783$ & $0.846$ & $0.846$ & $0.789$ & $0.851$ & $0.846$ & $0.797$ & $0.720$ & $0.613$ & $0.848$ & $0.841$ & $0.825$ & $0.817$ & $0.866$ & $0.822$ & $0.867$ \\
Setup 3 & $0.613$ & $0.766$ & $0.583$ & $0.624$ & $0.789$ & $0.790$ & $0.787$ & $0.611$ & $0.588$ & $0.582$ & $0.583$ & $0.624$ & $0.599$ & $0.590$ & $0.717$ & $0.789$ & $0.782$ & $0.790$\\
Setup 4 & $0.743$ & $0.712$ & $0.640$ & $0.746$ & $0.742$ & $0.743$ & $0.744$ & $0.730$ & $0.737$ & $0.593$ & $0.505$ & $0.722$ & $0.745$ & $0.741$ & $0.593$ & $0.719$ & $0.745$ & $0.739$\\
\midrule
Rank    &  $7.00$ & $7.50$ & $12.50$ & $4.75$ & $7.25$ & $6.75$ & $7.00$ & $11.50$ & $11.25$ &  $13.50$ & $17.00$ & $11.00$ & $9.50$ & $9.75$ & $9.75$ & $4.75$ & $6.50$ & $4.25$ \\
\bottomrule
\end{tabularx}
%}
\end{adjustbox}
\end{table*}
% \centering
% %\scalebox{0.9}[0.9]{
% \begin{tabularx}{1\textwidth}{l *{9}{>{\centering\arraybackslash}X}}
% \toprule
%  Setup &  \shortstack{LD(0.5)} &  \shortstack{MD} &  \shortstack{B-ILD\\ (0.1)} &  \shortstack{B-ILD\\ (0.3)} &  \shortstack{B-ILD\\ (0.5)}  &  \shortstack{PILD}  &  \shortstack{B-PILD} &  \shortstack{PILD$_\alpha$} &  \shortstack{B-PILD$_\alpha$}\\
% \midrule
% Setup 1 & $0.872$ & $0.864$ & $0.849$ & $0.870$ & $0.871$ & $0.873$ & $0.874$ & $0.871$ & $0.871$ \\
% Setup 2 & $0.720$ & $0.613$ & $0.848$ & $0.841$ & $0.825$ & $0.817$ & $0.866$ & $0.822$ & $0.867$ \\
% Setup 3 & $0.582$ & $0.583$ & $0.624$ & $0.599$ & $0.590$ & $0.717$ & $0.789$ & $0.782$ & $0.790$ \\
% Setup 4 & $0.593$ & $0.505$ & $0.722$ & $0.745$ & $0.741$ & $0.593$ & $0.719$ & $0.745$ & $0.739$ \\
% \midrule
% Rank   &  $13.50$ & $17.00$ & $11.00$ & $9.50$ & $9.75$ & $9.75$ & $4.75$ & $6.50$ & $4.25$ \\
% \bottomrule
% \end{tabularx}
% %}
% \end{table*}
% \end{adjustbox}
Based on Figure \ref{Fig:boxplots} and Table \ref{Tab:accuracy}, we observe that all methods perform similarly in Setup $1$, which is characterized by elliptical distributions. In contrast, for setups with non-convex shapes (Setups $2$ and $4$), methods that consider all data points tend to be less effective \textcolor{black}{(e.g., MD, Full-ILD and PILD)}. This finding aligns with the observation that the cross-validated locality parameters $\beta$ and $B$ are small in these setups. Comparing the maximum LD and maximum ILD classifiers with various parameters, we find that the maximum ILD classifiers demonstrate greater robustness, as expected. In cases with large overlapping regions (Setups $3$ and $4$), the $\alpha$-procedure significantly improves classification performance. Notably, while the maximum Full-ILD alone does not perform well in these setups, applying the $\alpha$-procedure boosts its effectiveness, making it highly competitive among all tested classifiers. It is also noteworthy that in Setup $3$, B-PILD is the only method that achieves top-tier performance without the $\alpha$-procedure. Across all setups, among the parameter-free methods, Full-ILD consistently outperforms MD, except in Setup $3$. Similarly, the newly proposed PILD method outperforms MD across all setups. Compared to Full-ILD, PILD demonstrates significantly better performance in Setups $2$ and $3$, though it underperforms in Setup $4$. Overall, the proposed B-PILD and its $\alpha$-variant exhibit the best performance across all setups, highlighting their robustness and adaptability to varying data distributions.

\subsection{Outlier detection}
The objective of this section is not to propose a new outlier detection method but rather to demonstrate how the proposed $\beta$-ILD and its corresponding PILD matrix can improve the performance of  global depth and the $\beta$-LD in outlier detection. In the following experiments, we adopt a straightforward and widely used evaluation approach~\cite{campos2016evaluation}: for each dataset, we assume the proportion of outliers is known. For each method under consideration, we identify outliers as the points with the highest or lowest values of the used criteria (depending on the method) within that specified proportion. Precision is used as the metric to evaluate the performance of different outlier detection methods. Importantly, these experiments are conducted in a unsupervised setting, without splitting the data into training and testing sets.

The depth-based methods considered include $\beta$-LD, $\beta$-ILD, and the column sums of the PILD matrix. For each dataset, these methods share the same locality parameter. When the locality parameter is set to $1$, $\beta$-LD reduces to a global depth measure. To provide context for understanding the performance of depth-based methods, we also include several widely used outlier detection techniques: Local Outlier Factor (LOF) \cite{breunig2000lof}, Isolation Forest (IF) \cite{liu2008isolation}, Natural Outlier Factor (NOF) \cite{huang2016non}, and Relative Density-based Outlier Factor (RDOS) \cite{tang2017local}. LOF, NOF, and RDOS are density-based methods that identify outliers by comparing the density of a point to that of its neighbors.  LOF uses k-nearest neighbors (kNN) to define the reachability distance between points, which is then used to calculate local density for identifying outliers. NOF defines neighborhoods based on the natural neighborhood algorithm, and RDOS refines density estimation by employing kernel density estimation. IF is a binary tree-based method which detects anomalies by isolating points with fewer splits during random partitioning of the feature space. These methods are implemented using the R packages \texttt{dbscan} \cite{dbscan2024} for LOF, \texttt{isotree} \cite{isotree2024} for IF, and \texttt{DDoutlier} \cite{DDout2018} for both NOF and RDOS.

We also explore the potential avenue for incorporating data depth into other algorithms. Because density-based algorithms typically rely on a distance matrix to define neighborhoods, it is natural to consider replacing the Euclidean distance matrix with a depth-based dissimilarity matrix. Here, we introduce two types of similarity matrices based on data depth, with dissimilarities simply defined as $1-\text{Similarity}$.
The first type of similarity matrix has been constructed when defining the $\beta$-neighborhood used for computing $\beta$-LD and $\beta$-ILD. Specifically, the $ij$-th entry represents the depth value of point $\vecx_j$ with respect to the reflection dataset $\vecX_{R\vecx_i}$ (see Definition \ref{def:betanbr}).
The second similarity matrix is constructed from the PILD matrix $\mathbb{P}$. However, the PILD matrix is not directly suitable as a similarity matrix for two key reasons. First, since the row sum of $\mathbb{P}$ equals the $\beta$-ILD value of a point, the scale of the entries varies across rows. Second, the diagonal values of $\mathbb{P}$ may not be the largest in their respective columns, which is inconsistent with a similarity interpretation. To address these issues, we transform the PILD matrix $\mathbb{P}$ into a similarity matrix by normalizing each row by its diagonal element, $\mathbb{P}'_{ij} = \mathbb{P}_{ij}/\mathbb{P}_{ii}$ for all elements, and ensuring symmetry by defining the similarity between points $\vecx_i$ and $\vecx_j$ as $ \min(\mathbb{P}'_{ij}, \mathbb{P}'_{ji})$. We further evaluate the two types of similarity matrices using the LOF method in subsequent experiments, referring to them as LOF$_{\text{SD}}$ and LOF$_{\text{PILD}}$ respectively.

The experiments begin with two toy examples. In each example, the dataset consists of $400$ inliers and $100$ outliers with $100$ runs. The first example consists of a unimodal distribution with outliers scattered around it, while the second example involves a mixture of distributions with outliers positioned between the components. Examples are shown in Figure \ref{fig:toyoutliers}. The detailed settings are as follows:
\begin{itemize}
    \item The inliers are generated from a $t$-Copula using the R package \texttt{copula} \cite{copulas2024}, with Kendall's tau $0.0638$ and degrees of freedom $1$. The outliers are uniformly distributed within the square $[-0.2, 2]$ along both $x$- and $y$-axes, excluding points with a Euclidean distance smaller than $0.1$ from any inlier.
    \item The inliers consist of a mixture of four bivariate normal distributions, each contributing equally to the dataset. The outliers are sampled from another set of bivariate normal distributions positioned between these components. Both inliers and outliers have diagonal covariance matrices:
    \[
    \Sigma_{\text{inliers}} = \begin{bmatrix} 1 & 0 \\ 0 & 1 \end{bmatrix}, \quad 
    \Sigma_{\text{outliers}} = \begin{bmatrix} 2 & 0 \\ 0 & 2 \end{bmatrix}.
    \]
    The inlier means are located at $(4, 4)$, $(-4, 4)$, $(-4, -4)$, and $(4, -4)$. The outlier means are positioned at $(0, 5)$, $(-5, 0)$, $(0, -5)$, and $(5, 0)$.
\end{itemize}
\begin{figure*}[t]
     \centering
     \begin{subfigure}[H!]{0.47\textwidth}
         \centering
		\includegraphics[width=\textwidth]{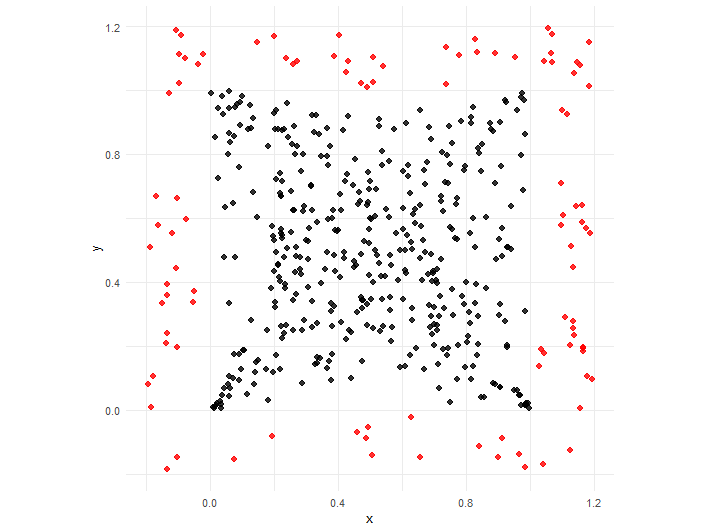}
         \caption{}
     \end{subfigure}
     \hfill
     \begin{subfigure}[H!]{0.47\textwidth}
         \centering
         \includegraphics[width=\textwidth]{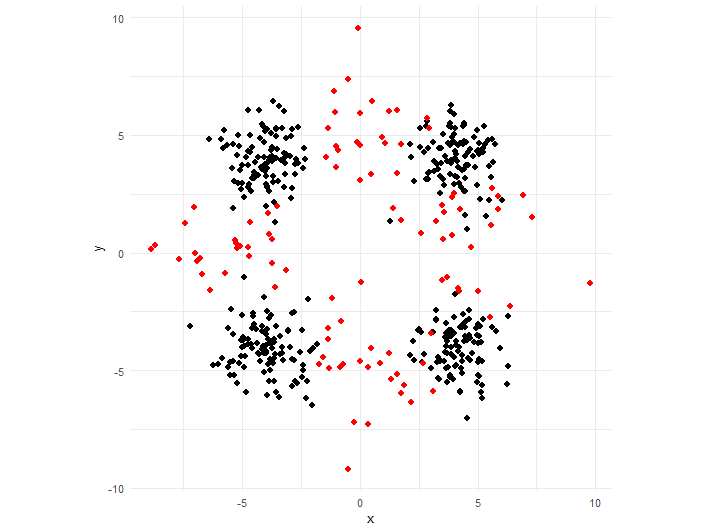}
         \caption{}
     \end{subfigure}
     \caption{Graphs of examples for two toy examples with red points representing outliers. $(a)$: t-Copula; $(b)$ mixtures of normal distributions.}
     \label{fig:toyoutliers}
\end{figure*}
From Figure \ref{fig:outliers}, it is evident that in the two toy examples, $\beta$-ILD consistently outperforms GD and LD, while PILD further improves upon $\beta$-ILD, particularly in the second example. LOF$_\text{PILD}$ performs better than LOF$_\text{SD}$ in the first example, while it is the opposite in the second example. However, their overall differences are minimal, as the simulated data largely conforms to circular shapes, making alternative dissimilarity metrics less impactful.

After the toy examples, we consider five real-world datasets: WPBC, WDBC, Ionosphere, Waveform, and Spambase. These datasets represent a range of scenarios, including small ($n < 500$) and large sample size ($n > 3000$), as well as varying levels of outlier contamination, from low to high. Details of these datasets, including their sources, are provided in the study by Campos et al. \cite{campos2016evaluation} on unsupervised outlier detection. We use the non-duplicate and normalized versions of these datasets. The dataset information and the precision of the methods are summarized in Table \ref{tab:results}. For all methods, we select the optimal parameters that maximize precision. \textcolor{black}{For IF, we fix the number of trees at $500$ and vary the subsampling parameter, which controls the number of data points used per tree, from $4$ to $512$.} For LOF-based methods, all possible values of $ k $ are tested up to $\min\left(n/2, 2000\right)$. For RDOS, the Gaussian kernel bandwidth is fixed at $2$, and $ k $ is tested up to $200$. For depth-based methods, we evaluate locality levels of $0.1$, $0.3$, $0.5$, and $1$.

\begin{figure*}[ht!]
     \centering
     \begin{subfigure}[H!]{0.47\textwidth}
         \centering
		\includegraphics[width=\textwidth]{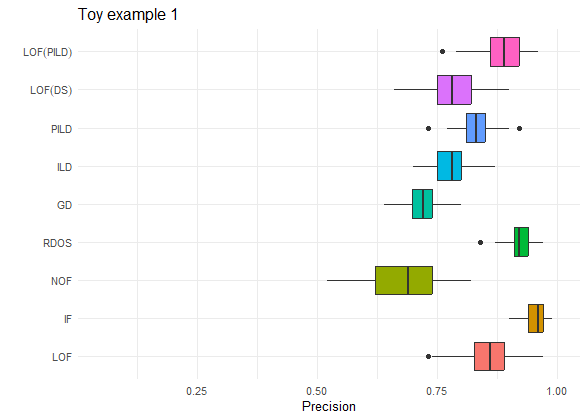}
         \caption{}
     \end{subfigure}
     \hfill
     \begin{subfigure}[H!]{0.47\textwidth}
         \centering
         \includegraphics[width=\textwidth]{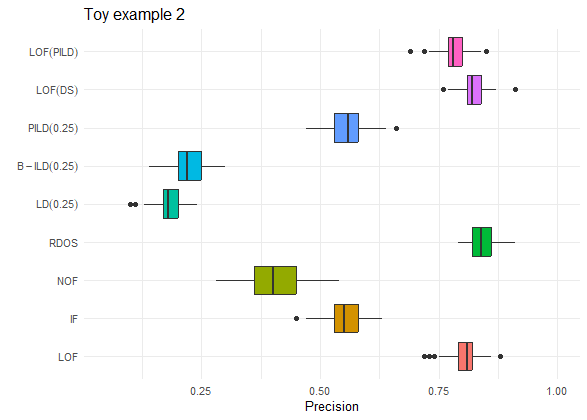}
         \caption{}
     \end{subfigure}
     \caption{Boxplots of precision in anomaly detections for two toy examples based on $100$ simulations. $(a)$: the $k$'s in LOF, RDOS, LOF$_\text{SD}$ and LOF$_\text{PILD}$ are $20$. $(b)$: the $k$'s in LOF, RDOS, LOF$_\text{SD}$ and LOF$_\text{PILD}$ are $91$, $105$, $61$ and $71$ respectively. The locality level for all methods is $0.25$.}
     \label{fig:outliers}
\end{figure*}

From Table \ref{tab:results}, there are no obvious differences in performance between $\beta$-LD and $\beta$-ILD when considering the real world data sets. Nevertheless, PILD consistently outperforms both. LOF$_\text{SD}$ surpasses LOF for WPBC, Waveform, and SpamBase, while LOF$_\text{PILD}$ excels for WPBC, WDBC, and SpamBase. Notably, LOF$_\text{PILD}$ achieves the best results for WPBC and WDBC, while LOF$_\text{SD}$ stands out as the top-performing method for Waveform, showing clear improvements over the original LOF. Interestingly, for the Ionosphere dataset with $B=0.1$, LOF$_\text{PILD}$ underperforms compared to the column sums of the PILD matrix. This result may be due to the simplified structure of the PILD-based similarity matrix: points within each other's $\beta_1$-neighborhoods have a similarity of $1$, while points outside a given point's $B$-neighborhood have a similarity of $0$. With a small $B$, this simplification can overly reduce the complexity of relationships between points.  
\begin{table}[h!]
\centering
\caption{\textcolor{black}{Precision comparison on benchmark datasets: GD, LD, ILD represent global depth, $\beta$-LD and $\beta$-ILD. For each dataset, all depth-based methods use the same locality level. Specifically, the locality level is set to $1$ for WPBC and WDBC, and to $0.1$, $0.3$, and $0.5$ for Ionosphere, Waveform, and SpamBase, respectively.}}
\label{tab:results}
\begin{adjustbox}{max width=\textwidth}
\begin{tabular}{lccc|ccccccccc}
\toprule
\textbf{Dataset} & \textbf{n} & \textbf{d} & \textbf{\% Outliers} & \textbf{LOF} & \textbf{IF} & \textbf{NOF} & \textbf{RDOS} & \textbf{GD/LD} & \textbf{ILD} & \textbf{PILD} & \textbf{LOF$_\text{SD}$} & \textbf{LOF$_\text{PILD}$} \\
\midrule
WPBC        & $198$   & $33$  & $23.74\%$   & $0.2128$   & $0.1702$   & $0.1702$   & $0.2340$   & $0.1489$   & $0.1702$   & $0.2128$   & $0.2340$   & $\mathbf{0.3404}$   \\
WDBC        & $367$   & $30$  & $2.72\%$   & $0.6$   & $\mathbf{0.7}$   & $0.2$   & $0.6$   & $0.6$   & $0.5$   & $0.6$   & $0.6$   & $\mathbf{0.7}$   \\
Ionosphere  & $351$   & $33$  & $35.90\%$   & $\mathbf{0.8254}$   & $0.6667$   & $0.3492$   & $0.7619$   & $0.8016$   & $0.8016$   & $0.8175$   & $0.8016$   & $0.8095$   \\
Waveform    & $3443$   & $21$  & $2.90\%$   & $0.22$   & $0.08$   & $0$   & $0.21$   & $0.06$   & $0.06$   & $0.11$   & $\mathbf{0.32}$   & $0.16$   \\
SpamBase    & $4601$   & $57$  & $39.40\%$   & $0.4131$   & $\mathbf{0.6387}$   & $0.3491$   & $0.4479$   & $0.3900$   & $0.3900$   & $0.4181$   & $0.4153$   & $0.4263$   \\
\bottomrule
\end{tabular}
\end{adjustbox}
\end{table}

\section{Conclusion} \label{sec:conclu}
In this paper, we introduce a novel local depth definition, $\beta$-integrated local depth, built upon $\beta$-local depth. $\beta$-ILD generalizes $\beta$-LD, retaining compatibility with most depth definitions while offering greater robustness. We analyze its theoretical properties inherited from global depth and $\beta$-LD, and establish its partitioned local depth form by constructing the PILD matrix, which draws inspiration from local community depth. Furthermore, we highlight the interpretability of the column sums of the PILD matrix, demonstrating their effectiveness as a measure of local centrality.

We consider the approachability of these new concepts in classification and outlier detection, showing significant improvements over traditional global and local depth methods. However, $\beta$-ILD incurs higher computational costs. While spatial depth allows efficient computation of $\beta$-ILD by supporting incremental updates as $\beta$ grows, other depth definitions, such as projection depth, require full recomputation with varing $\beta$ as the local neighborhood size changes, making them much more computationally intensive.

Future research should focus on developing sampling-based or approximation techniques to mitigate these computational challenges, thereby enabling the broader application of $\beta$-ILD and PILD in large-scale and high-dimensional scenarios.
\section*{Appendix Proofs}
\label{app1}
This appendix contains the proofs of properties and propositions stated in this paper. Note that many properties of $\beta$-ILD are deduced from those of $\beta$-LD via the Dominated Convergence Theorem (DCT), provided that the corresponding property holds for $\beta$-LD. Here, we provide detailed proofs for properties of $\beta$-LD not covered in detail in \cite{paindaveine2013depth}, and omit proofs that follow directly by applying the DCT to existing results (e.g., Property~\ref{prop:consis}, which follows from Theorem~\ref{thm:consis1}).
\begin{proof}[Proof of Proposition \ref{pro:1}]
The maximum jump between successive local depth values is represented as
\begin{equation}\label{eq:LDupdf}
\Delta = \max_{1 < i \le b} |LD(\beta_i) - LD(\beta_{i-1})|.
\end{equation}
For the sample $\beta$-ILD, we analyze the difference between $ILD(\beta_b)$ and $ILD(\beta_{b-1})$ for a point $\vecx$
\begin{eqnarray}\notag
    ILD(\beta_b) - ILD(\beta_{b-1}) &=& \sum_{i=1}^{b} LD(\beta_i) w_B(\beta_i) - \sum_{i=1}^{b-1} LD(\beta_i) w_{B'}(\beta_i), \\\notag
    &=& \frac{\sum_{i=1}^{b} LD(\beta_i)}{b} - \frac{\sum_{i=1}^{b-1} LD(\beta_i)}{b-1}, \\
    \notag
    &=& \frac{LD(B)}{b} - \frac{\sum_{i=1}^{b-1} LD(\beta_i)}{b(b-1)}, \\
    &=& \frac{\sum_{i=1}^{b-1} \left( LD(B) - LD(\beta_i) \right)}{b(b-1)}.
\label{eq:ILD-ILD}
\end{eqnarray}
For each term of above sum, note that 
$
|LD(\beta_b) - LD(\beta_i)| \le (b - i) \Delta.
$ Therefore, we obtain the bound
\[
|ILD(\beta_b) - ILD(\beta_{b-1})| \le \frac{1}{b(b-1)} \sum_{i=1}^{b-1} (b - i) \Delta
= \frac{1}{b(b-1)} \cdot \frac{b(b-1)}{2} \Delta
= \frac{1}{2} \Delta.
\]
The same reasoning applies to any pair $(\beta_i, \beta_{i-1})$ with $1 < i \le b$, as among them, the difference in $\beta$-LD values will not exceed \eqref{eq:LDupdf}, hence we can conclude that
\[
\max_{1 < i \le b} |ILD(\beta_i) - ILD(\beta_{i-1})| \le \frac{1}{2} \Delta.
\]
For the second inequality, from \eqref{eq:ILD-ILD} again,
$$
\bigl|ILD(\beta_b) - ILD(\beta_{b-1})\bigr|\le\frac{(b-1)\Delta^\dagger}{b(b-1)}=\frac{\Delta^\dagger}{b}.
$$
\end{proof}
Several of the following proofs rely on the next lemma, which characterizes the behavior of symmetric mixture distributions under affine transformations.
\begin{lemma}
\label{lem:affine_mixture}
Let $\vecX \sim P_{\vecX}$ be a random vector in $\mathbb{R}^d$, and fix $\vecx \in \mathbb{R}^d$. Define the reflected random variable $\vecX' := 2\vecx - \vecX$, and let $P_{\vecX'}$ be its distribution. Consider the symmetric mixture distribution
\[
Q_{\vecX} := 0.5\, P_{\vecX} + 0.5\, P_{\vecX'}.
\]
Let $T(\vecz) = \mathbb{A} \vecz + \vecc$ be any non-singular affine transformation, where $ \mathbb{A} $ is a non-singular matrix representing a linear transformation, and $ \vecc $ is a translation vector.. Then,
\[
Q_{T(\vecX)} = 0.5\, P_{T(\vecX)} + 0.5\, P_{T(\vecX')}.
\]
\end{lemma}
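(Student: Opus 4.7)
The plan is to verify by direct algebra that reflection about $\vecx$ and the affine map $T$ interact cleanly, namely that
$$2\,T(\vecx) - T(\vecX) \;=\; T(2\vecx - \vecX) \;=\; T(\vecX'),$$
and then unpack the definition of the symmetric mixture attached to the new center $T(\vecx)$. The whole argument is essentially a one-line computation followed by a bookkeeping step.

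First, I would establish the pointwise identity $2(\mathbb{A}\vecx + \vecc) - (\mathbb{A}\vecX + \vecc) = \mathbb{A}(2\vecx - \vecX) + \vecc$, which is just distribution and cancellation of the translation $\vecc$. Since this equality holds on the underlying probability space as measurable maps, it upgrades to equality of push-forward measures $P_{2T(\vecx) - T(\vecX)} = P_{T(\vecX')}$.

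Second, I would invoke the very definition of the symmetric mixture centered at the image point $T(\vecx)$,
$$Q_{T(\vecX)} \;=\; 0.5\,P_{T(\vecX)} + 0.5\,P_{2T(\vecx) - T(\vecX)},$$
and substitute the distributional identity from the first step to obtain $0.5\,P_{T(\vecX)} + 0.5\,P_{T(\vecX')}$, which is exactly the right-hand side of the lemma.

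I do not anticipate any genuine obstacle; the result is a structural observation rather than a hard theorem. The non-singularity hypothesis on $\mathbb{A}$ is not used in the algebraic identity itself and is retained because the lemma is designed to be invoked inside proofs of $\mathcal{T}$-invariance (cf.\ Property \ref{prop:inva}) and related central-region arguments, where one needs $T$ to be invertible so that densities, supports, and the central regions $R^\beta_\vecx$ transform correctly under the push-forward.
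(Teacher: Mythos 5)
Your argument is correct, but it takes a genuinely different route from the paper's and, strictly speaking, proves a complementary reading of the statement. The paper interprets $Q_{T(\vecX)}$ as the image (push-forward) measure of $Q_{\vecX}$ under $T$ and verifies the identity by a density change-of-variables computation, writing $f_{Q_{T(\vecX)}}(\vecy)=|\det \mathbb{A}|^{-1}f_{Q_{\vecX}}(\mathbb{A}^{-1}(\vecy-\vecc))$ and matching it with $0.5\,f_{P_{T(\vecX)}}+0.5\,f_{P_{T(\vecX')}}$; that computation leans on the non-singularity of $\mathbb{A}$ and on an implicit assumption that $P_{\vecX}$ admits a density, neither of which your proof needs. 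You instead read $Q_{T(\vecX)}$ as the symmetric mixture of $T(\vecX)$ about the image center $T(\vecx)$ and obtain the identity from the pointwise commutation $2T(\vecx)-T(\vecX)=T(2\vecx-\vecX)=T(\vecX')$, correctly upgraded to equality of the corresponding laws. What your route buys is generality and transparency: it works for arbitrary distributions and even singular $\mathbb{A}$, and it isolates the geometric fact (affine maps commute with reflection about the designated center) that the invariance argument in Property~\ref{prop:inva} actually needs when it treats $0.5\,P_{T(\vecX)}+0.5\,P_{T(\vecX')}$ as the mixture defining local depth at $T(\vecx)$. What the paper's computation establishes, and what your write-up leaves implicit, is the other half of the picture: that the push-forward of the mixture $Q_{\vecX}$ equals the mixture of the push-forwards, which is what licenses applying the $\mathcal{T}$-invariance of $D$ to $Q_{\vecX}$ in \eqref{eq:affine}; this is immediate from the definition of an image measure (evaluate both sides on $T^{-1}(\mathcal{B})$) and requires no densities, so it is not a gap in your reasoning, but it is worth stating if your version of the lemma is to be cited exactly where the paper cites its own.
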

\begin{proof}
Let $f_{P_{\vecX}}$ and $f_{P_{\vecX'}}$ denote the densities of $\vecX \sim P_{\vecX}$ and $\vecX' \sim P_{\vecX'}$, respectively, where $f_{P_{\vecX'}}(\vecz) = f_{P_{\vecX}}(2\vecx - \vecz)$. The mixture distribution $Q_{\vecX}$ has density
\begin{equation}\label{eq:qdense}
f_{Q_{\vecX}}(\vecz) = 0.5\, f_{P_{\vecX}}(\vecz) + 0.5\, f_{P_{\vecX}}(2\vecx - \vecz).
\end{equation}
For the distribution $Q_{T(\vecX)}$, the density is
\[
f_{Q_{T(\vecX)}}(\vecy) = \frac{1}{|\det \mathbb{A}|}\, f_{Q_{\vecX}}(\mathbb{A}^{-1}(\vecy - \vecc)).
\]
Substituting \eqref{eq:qdense}, we obtain
\[
f_{Q_{T(\vecX)}}(\vecy) = \frac{1}{|\det \mathbb{A}|} \left[ 0.5\, f_{P_{\vecX}}(\mathbb{A}^{-1}(\vecy - \vecc)) + 0.5\, f_{P_{\vecX}}(2\vecx - \mathbb{A}^{-1}(\vecy - \vecc)) \right].
\]
Now consider transforming $ P_\vecX $ and $ P_{\vecX'} $ separately under $ T $. We have
\[
f_{P_{T(\vecX)}}(\vecy) = \frac{1}{|\det \mathbb{A}|} f_{P_{\vecX}}(\mathbb{A}^{-1}(\vecy - \vecc)), \quad
f_{P_{T(\vecX')}}(\vecy) = \frac{1}{|\det \mathbb{A}|} f_{P_{\vecX}}(2\vecx - \mathbb{A}^{-1}(\vecy - \vecc)).
\]
Then their mixture yields
\[
0.5\, f_{P_{T(\vecX)}}(\vecy) + 0.5\, f_{P_{T(\vecX')}}(\vecy) = f_{Q_{T(\vecX)}}(\vecy).
\]
Hence,
$
Q_{T(\vecX)} = 0.5\, P_{T(\vecX)} + 0.5\, P_{T(\vecX')}.
$
\end{proof}
\begin{proof}[Proof of Property \ref{prop:ctn}]
        Let $(\vecx_n)_{n \geq 1}$ be a sequence in $\mathbb{R}^d$ such that $\vecx_n \to \vecx$ as $n \to \infty$, and let $\beta \in (0,1]$ be fixed. Since the integrated local depth (ILD) is defined as an integral of the local depth (LD), we should show
    \[
    LD^\beta(\vecx_n \mid P) \xrightarrow[n \to \infty]{} LD^\beta(\vecx \mid P),
    \]
    which is equivalent to:
    \[
    D(\vecx_n \mid P^{\beta}_{\vecx_n}) \xrightarrow[n \to \infty]{} D(\vecx \mid P^\beta_\vecx).
    \]
     The central region $R^\beta_{\vecx}$ corresponding to the symmetrized distribution $r(P_{\vecX}, \vecx) = \frac{1}{2}P_{\vecX} + \frac{1}{2}P_{2\vecx - \vecX}$ is formally represented as
\[
R^\beta_{\vecx} = \bigcap_{\alpha \in A(\beta)} R_\alpha(r(P_\vecX, \vecx)), \quad \text{where} \quad A(\beta) = \{\alpha : P(R_\alpha(r(P_\vecX, \vecx))) \geq \beta\}.
\]
Here, $R_\alpha(r(P_\vecX, \vecx))$ represents the set of points $\vecx \in \mathbb{R}^d$ for which the depth $D(\vecx \mid r(P_\vecX, \vecx)) \geq \alpha$.

Since $P$ is absolutely continuous, it is enough to show $P^\beta_{\vecx_n}\xrightarrow[n \to \infty]{} P^\beta_\vecx$, where each distribution is obtained by conditioning $P$ on the regions $R^\beta_{\vecx_n}$ and $R^\beta_{\vecx}$, respectively. We establish this result in two steps. In the first step, we establish that
\begin{eqnarray}
    D(\vecx_n \mid \frac{1}{2}P_{\vecX} + \frac{1}{2}P_{2\vecx_n - \vecX}) \xrightarrow[n \to \infty]{} D(\vecx \mid \frac{1}{2}P_{\vecX} + \frac{1}{2}P_{2\vecx - \vecX}).
\end{eqnarray}
In the second step, we consider the restrictions to $R^\beta_{\vecx_n}$ and $R^\beta_{\vecx}$.\\
\begin{enumerate}[Step 1]
    \item Now, consider the translation $T(\vecX) = \vecX + (\vecx - \vecx_n)$. Since $D$ is invariant under similarity transformations and by Lemma~\ref{lem:affine_mixture}, we can equivalently write
\[
D(\vecx_n \mid \frac{1}{2}P_{\vecX} + \frac{1}{2}P_{2\vecx_n - \vecX}) = D(\vecx \mid \frac{1}{2}P_{\vecX - \vecx_n + \vecx} + \frac{1}{2}P_{\vecx_n + \vecx - \vecX}),
\]
As $n \to \infty$, both $P_{\vecX - \vecx_n + \vecx}$ and $P_{\vecx_n + \vecx - \vecX}$ converge weakly to $P_{\vecX}$ and $P_{2\vecx - \vecX}$, respectively, so the mixture converges accordingly
\[
\frac{1}{2}P_{\vecX - \vecx_n + \vecx} + \frac{1}{2}P_{\vecx_n + \vecx - \vecX} \xrightarrow[n \to \infty]{} \frac{1}{2}P_{\vecX} + \frac{1}{2}P_{2\vecx - \vecX}.
\]
This implies
\[
D(\vecx_n \mid \frac{1}{2}P_{\vecX} + \frac{1}{2}P_{2\vecx_n - \vecX}) = D\left(\vecx \mid \frac{1}{2}P_{\vecX - \vecx_n + \vecx} + \frac{1}{2}P_{\vecx_n + \vecx - \vecX}\right) \xrightarrow[n \to \infty]{} D(\vecx \mid \frac{1}{2}P_{\vecX} + \frac{1}{2}P_{2\vecx - \vecX}).
\]
\item According to the relationship between depth function convergence and central regions, as summarized by Dyckerhoff \cite{dyckerhoff2016convergence}, if $D$ is strictly monotonous and its central region is convex, the central regions will converge in the Hausdorff sense (While the original proof addresses sample distributions converging to the theoretical distribution as $n \to \infty$, it relies on constructing convergent sequences and a proof by contradiction, which is also applicable to our setting).

Next, consider a Borel set $\mathcal{B} \subset \mathbb{R}^d$, there is $P(\mathcal{B} \mid R^\beta_\vecx) = \frac{P(\mathcal{B} \cap R^\beta_\vecx)}{P(R^\beta_\vecx)}$. The Hausdorff convergence $R^\beta_{\vecx_n} \xrightarrow[n \to \infty]{H} R^\beta_{\vecx}$ implies that for any $\vecz \in R^\beta_{\vecx}$, there exists $N_0 \in \mathbb{N}$ such that for all $n \geq N_0$, $\vecz \in R^\beta_{\vecx_n}$. Consequently, we have
\[
\mathbf{1}_{R^\beta_{\vecx_n}}(\vecz) \to \mathbf{1}_{R^\beta_\vecx}(\vecz), \quad \text{and} \quad \mathbf{1}_{\mathcal{B} \cap R^\beta_{\vecx_n}}(\vecz) \to \mathbf{1}_{\mathcal{B} \cap R^\beta_{\vecx}}(\vecz),
\]
as $n\to\infty$, where $\mathbf{1}$ represents the indicator function.
Since $P$ is absolutely continuous, its associated probability density function $f$ is bounded and integrable, and we can apply the DCT,
\[
\int_{\mathcal{B} \cap R^\beta_{\vecx_n}} f(\vecz) \, d\vecz \to \int_{\mathcal{B} \cap R^\beta_{\vecx}} f(\vecz) \, d\vecz \quad \text{and} \quad \int_{R^\beta_{\vecx_n}} f(\vecz) \, d\vecz \to \int_{R^\beta_{\vecx}} f(\vecz) \, d\vecz.
\]
Thus, $P^\beta_{\vecx_n} \xrightarrow[n \to \infty]{} P^\beta_{\vecx}$ holds if $\vecx_n$ is in the support of $P$, i.e.,$P(R^\beta_{\vecx_n}) > 0$ for any $\beta > 0$.
\end{enumerate}
Finally, since the $\beta$-LD is continuous with respect to $\beta \in (0,1]$ and bounded, we apply the DCT to conclude that the $\beta$-integrated local depth $ILD(\vecx \mid P)$ is also continuous at $\vecx$.
\end{proof}
\begin{proof}
\textbf{Property \ref{prop:inva}}:
Assume the depth function $D(\vecx \mid P)$ is invariant under any $T \in \mathcal{T}$. Let $Q_{\vecX}$, $P_{\vecX}$ and $P_{\vecX'}$ be defined as in Lemma~1. By Lemma~1, for any non-singular affine transformation $T$, we have $Q_{T(\vecX)} = 0.5\, P_{T(\vecX)} + 0.5\, P_{T(\vecX')}$, and hence
\begin{equation}\label{eq:affine}
D(\vecx \mid Q_{\vecX}) = D(T(\vecx) \mid 0.5\, P_{T(\vecX)} + 0.5\, P_{T(\vecX')}).
\end{equation}
Since the central region of a depth function inherits the invariance properties of the depth function \cite{zuo2000structural}, it follows from \eqref{eq:affine} that for $\beta$-LD,
\begin{equation}
LD(\beta \mid T(\vecx), P_{T(\vecX)}) = D(T(\vecx)\mid P^\beta_{T(\vecx)}) = D(\vecx\mid P^\beta_{\vecx}) = LD(\beta \mid \vecx, P_{\vecX}).
\end{equation}
By the previous equality and $W$ being invariant under $T$, $\beta$-ILD inherits the affine invariance property.
\end{proof}

\begin{proof}\textbf{Property \ref{prop:central}}:
\[
ILD^B(\vecx \mid P) = \int_0^B LD^\beta(\vecx \mid P) \, dW(\beta).
\]

By the continuity of the local depth function $LD^\beta(\vecx \mid P)$ with respect to $\beta$, we know that $LD^\beta(\vecx \mid P)$ is bounded and continuous on $(0, B]$. The weighting measure $W$ satisfies $W((0, B]) = 1$. Then, the Mean Value Theorem for weighted integrals ensures that there exists some $\beta^*\in(0,B]$ such that
$$
\frac{\int_0^B LD^\beta(\vecx\mid P)\,w(\beta)\,d\beta}
     {\int_0^B w(\beta)\,d\beta}
\;=\;
LD^{\beta^*}(\vecx\mid P).
$$
Since $W((0,B])=\int_0^B w(\beta)\,d\beta=1$, this immediately gives
$$
\int_0^B LD^\beta(\vecx\mid P)\,dW(\beta)
=LD^{\beta^*}(\vecx\mid P).
$$
As $B\to0$, the integration interval $(0,B]$ shrinks and hence $\beta^*\to0$. Combining this with the ``centrality in extreme locality level'' result for $\beta$-LD from \cite{paindaveine2013depth}, the statement is proved.

\end{proof}
\begin{proof}\textbf{Proposition \ref{pro:2}}
Assume that for any distribution $P$, the depth function satisfies
\[
\lim_{\|\vecx\| \to \infty} D(\vecx \mid P) = 0.
\]
For the $\beta$-LD, observe that $P^\beta_{\vecx}$ is simply $P$ restricted to the $\beta$-central region $R^\beta_{\vecx}(P)$ and then renormalized. 
\begin{itemize}
    \item \textbf{Probability-based depth}: For any depth function $D$ based on probability content, the following argument holds.
\begin{equation}\label{eq:ineqp4}
D(\vecx\mid P^\beta_{\vecx}) \le \frac{1}{P(R^\beta_{\vecx}(P))} D(\vecx\mid P) = \frac{1}{\beta} D(\vecx\mid P).
\end{equation}
To prove this argument, we take halfspace depth as an example, and others can be shown similarly.
\[
D(\vecx\mid P) = \inf\{P(H) : H \text{ is a closed halfspace with } \vecx \in \partial H\}.
\]
For a fixed $\beta \in (0,1]$, the halfspaces are now constrained within $R^\beta_{\vecx}(P)$, so
\begin{align*}
    LD^\beta(\vecx\mid P) = D(\vecx\mid P^\beta_{\vecx}) 
    &= \inf\left\{\frac{P(H \cap R^\beta_{\vecx}(P))}{P(R^\beta_{\vecx}(P))} : H \text{ is a closed halfspace with } \vecx \in \partial H\right\} \\
    &\le \frac{1}{\beta} \inf\{P(H) : H \text{ is a closed halfspace with } \vecx \in \partial H\} \\
    &= \frac{1}{\beta} D(\vecx\mid P).
\end{align*}
By \eqref{eq:ineqp4}, for any probability-based depth, it follows that
\[
\lim_{\|\vecx\| \to \infty} LD^\beta(\vecx \mid P) = \lim_{\|\vecx\| \to \infty} D(\vecx \mid P^\beta_{\vecx}) \leq \frac{1}{\beta}\lim_{\|\vecx\| \to \infty} D(\vecx\mid P) = 0.
\]
\item \textbf{Type B depth}: Type B depth is in the form
$
D(\vecx\mid P) = \frac{1}{1 + \mathbb{E}_{\vecy \sim P}[h(\vecy, \vecx)]},
$
where $h$ is a non-negative, unbounded, and center-outward monotone increasing function (e.g., $h(\vecx, \vecy)=\|\vecy-\vecx\|_p$ for $L^p$ depth).

By Jensen’s inequality,
\[
D(\vecx\mid P)\leq \mathbb{E}_{\vecy \sim P}\left[\frac{1}{1 + h(\vecy, \vecx)}\right]=\int \frac{1}{1 + h(\vecy, \vecx)}\, dP(\vecy).
\]
Similarly, for the local version,
\[
D(\vecx \mid P^\beta_\vecx) \leq \frac{1}{\beta}\int_{R^\beta_\vecx} \frac{1}{1 + h(\vecy, \vecx)}\, dP(\vecy) \leq \frac{1}{\beta}\int \frac{1}{1 + h(\vecy, \vecx)}\, dP(\vecy).
\]

Since $h(\vecy, \vecx)$ is unbounded and center-outward monotone increasing,
$
\frac{1}{1 + h(\vecy, \vecx)} \xrightarrow[\|\vecx\| \to \infty]{\text{a.s.}} 0.
$
By the DCT,
\[
\lim_{\|\vecx\| \to \infty} \int \frac{1}{1 + h(\vecy, \vecx)}\, dP(\vecy) = \int \lim_{\|\vecx\| \to \infty} \frac{1}{1 + h(\vecy, \vecx)}\, dP(\vecy) = 0.
\]
Therefore, $
\lim_{\|\vecx\| \to \infty} D(\vecx \mid P^\beta_\vecx) = 0.
$

\item \textbf{Spatial depth}: Recall the spatial depth function $
D(\vecx \mid P) = 1 - \left\| \mathbb{E}_{\vecy \sim P} \left[ \frac{\vecy - \vecx}{\|\vecy - \vecx\|} \right] \right\|.
$ The vanishing-at-infinity property of it implies $\lim_{\|\vecx\| \to \infty}\left\|\mathbb{E}_{\vecy \sim P}\left[\frac{\vecy-\vecx}{\|\vecy-\vecx\|}\right]\right\| \to 1$. For the local version we have $
LD^\beta(\vecx \mid P) = 1 - \left\| \mathbb{E}_{\vecy\sim P^\beta_\vecx}  \left[ \frac{\vecy - \vecx}{\|\vecy - \vecx\|} \right] \right\|.
$ Now let $g(\vecy, \vecx) = \frac{\vecy - \vecx}{\|\vecy - \vecx\|}$. Then,
\[
\mathbb{E}_{\vecy \sim P}[g(\vecy, \vecx)] = \int_{R^\beta_\vecx} g(\vecy, \vecx) dP(\vecy) + \int_{(R^\beta_\vecx)^c} g(\vecy, \vecx) dP(y),
\] where $(R^\beta_\vecx)^c$ denotes the complement of $R^\beta_\vecx$. Note
$
\mathbb{E}_{\vecy\sim P^\beta_\vecx}[g(\vecy, \vecx)] = \frac{1}{\beta} \int_{R^\beta_\vecx} g(\vecy, \vecx) dP(\vecy),
$
so that
\[
\mathbb{E}_{\vecy \sim P}[g(\vecy, \vecx)] = \beta \mathbb{E}_{\vecy\sim P^\beta_\vecx}[g(\vecy, \vecx)] + \int_{(R^\beta_x)^c} g(\vecy, \vecx) dP(y).
\]
Applying the triangle inequality and the Jensen's inequality,
\begin{eqnarray}\notag
    \left\|\mathbb{E}_{\vecy \sim P}[g(\vecy, \vecx)] \right\| &\leq& \beta \left\| \mathbb{E}_{\vecy\sim P^\beta_\vecx}[g(\vecy, \vecx)] \right\| + (1-\beta),\\\notag
     \left\| \mathbb{E}_{\vecy\sim P^\beta_\vecx}[g(\vecy, \vecx)] \right\| &\geq& \frac{1}{\beta} \left\| \mathbb{E}_{\vecy\sim P}[g(\vecy, \vecx)] \right\| -\frac{1}{\beta}+1.
\end{eqnarray}

As $\lim_{\|\vecx\| \to \infty} \left\| \mathbb{E}_{\vecy \sim P}[g(\vecy, \vecx)] \right\| = 1$, it follows that
$
\lim_{\|x\|\to\infty} \left\| \mathbb{E}_{\vecy\sim P^\beta_\vecx}[g(\vecy, \vecx)] \right\| \geq 1.
$
Since $\left\| \mathbb{E}_{\vecy \sim P^\beta_{\vecx}}[g(\vecy, \vecx)] \right\|$ is bounded above by $1$, its limit must be $1$. Thus,
\[
\lim_{\|\vecx\| \to \infty} LD^\beta(\vecx \mid P) = 1- \lim_{\|\vecx\| \to \infty}\left\| \mathbb{E}_{\vecy\sim P^\beta_\vecx}[g(\vecy, \vecx)] \right\| = 0.
\]
\end{itemize}

% Then, for $\beta$-ILD, the DCT implies
% \[
% \lim_{\|\vecx\| \to \infty} ILD(\vecx \mid P) = \int_0^1 \lim_{\|\vecx\| \to \infty} LD^\beta(\vecx \mid P) \, dW(\beta) = \int_0^1 0 \, dW(\beta) = 0.
% \]
\end{proof}
%% For citations use: 
%%       \cite{<label>} ==> [1]

%% If you have bib database file and want bibtex to generate the
%% bibitems, please use
%%
 \bibliographystyle{elsarticle-num} 
\bibliography{references}

%% else use the following coding to input the bibitems directly in the
%% TeX file.

%% Refer following link for more details about bibliography and citations.
%% https://en.wikibooks.org/wiki/LaTeX/Bibliography_Management

% \begin{thebibliography}{00}

% %% For numbered reference style
% %% \bibitem{label}
% %% Text of bibliographic item

% \bibitem{lamport94}
%   Leslie Lamport,
%   \textit{\LaTeX: a document preparation system},
%   Addison Wesley, Massachusetts,
%   2nd edition,
%   1994.
% \bibliography{references}
% \end{thebibliography}
\end{document}